\newtheorem{theorem}{Theorem}
\newtheorem{lemma}{Lemma}
\title{Title}
\author{}
\date{}
\begin{document}
{
  \title{\bf ARMA Models for Zero Inflated Count \\Time Series}
  \author{ Vurukonda Sathish $^a$,  Siuli Mukhopadhyay {\footnote {Corresponding author. Department of Mathematics, Indian Institute of Technology Bombay, Mumbai 400 076, India. Email: siuli@math.iitb.ac.in}} $^{b}$ and Rashmi Tiwari $^b$ \hspace{.2cm}\\
    $^a$ Department of Electrical Engineering\\
    $^b$ Department of Mathematics\\ Indian Institute of Technology Bombay, India}
  \maketitle
}



%
{\doublespacing
\begin{abstract}


Zero inflation is a common nuisance while monitoring disease progression over time.  This article proposes a new observation driven model for zero inflated and over-dispersed count time series.  The counts given the past history of the process and available information on covariates are assumed to be distributed as a mixture of a Poisson distribution and a distribution degenerate at zero, with a time dependent mixing probability,  $\pi_t$. Since, count data usually suffers from overdispersion,  a Gamma distribution is used to model the excess variation, resulting in a zero inflated negative binomial (NB) regression model with mean parameter $\lambda_t$. Linear predictors with auto regressive and moving average (ARMA) type terms, covariates, seasonality and trend are fitted to $\lambda_t$ and $\pi_t$ through canonical link generalized linear models. Estimation is done using maximum likelihood  aided by iterative algorithms, such as Newton Raphson (NR) and Expectation and Maximization (EM).  Theoretical results on the consistency and asymptotic normality of the estimators are given. 
The proposed model is illustrated using in-depth simulation studies and a dengue data set. 
\end{abstract}	

\noindent%
{\it Keywords:} EM algorithm, Mixture Distribution, Model Selection, Negative Binomial, Overdispersion, Prediction

\section{Introduction}

Time series modelling of  counts though a popularly researched topic, is often plagued by the problem of  excess zeros. For example, during geographical mapping  of the prevalence of a certain disease, it is commonly noted that  some regions/areas under study report a high degree of zero disease counts. In such situations, it is difficult to understand if the observed counts, represent  true disease prevalence or a spurious one (i.e., the disease may be present but was not observed/reported). 
However, ignoring the excess zero counts while setting up the time series model may lead to incorrect estimates and loss of significant results \citep{chaney}. 

The two main methods for modeling count time series are observation driven and parameter driven models (\cite{cox}). These two modelling techniques differ from each other in the way they account for the  autocorrelation in the data. While in observation driven models, the  autocorrelation is modeled as a function of the past responses, a latent variable approach is used in parameter driven models.  For nonnormal  time series, parameter driven models were considered by \cite{west}, \cite{fahrwagen}, \cite{durbin1997,durbin2000}, \cite{gamerman1998}, \cite{fahrmeir}, \cite{chiogna}, \cite{godolphin}, \cite{gamerman2013}, among others. 
To compute the posterior distributions of the parameters in such models, MCMC methods (\cite{durbin1997}, \cite{sp} and  \cite{gamerman1998}) were frequently used  for  estimation. \cite{Benjamin2003} noted that often these MCMC algorithms fail to converge resulting in poor inference and predictions. Also,  in more recent times the particle filter algorithm (\cite{dk})  has been sometimes used to replace these MCMC methods.   Compared to parameter driven models, the computational complexity for parameter estimation   is much less in observation-driven models (\cite{10.2307/2291149},
1995; \cite{durbin2000},  \cite{Davis2003}). In these models, the conditional distribution for each observation given  past information on responses and past and possibly present covariates  is described by a generalized linear model (GLM) distribution. Partial likelihood theory combined with GLMs  is used for  estimation. These type of models for
count time series have been used by \cite{ZegerQaqish1988},
\cite{Li1994}, \cite{fokianos2003}, 
\cite{Shephard1995}, \cite{Davis2003},
and \cite{Benjamin2003},  \cite{fokianos2009}, \cite{fokianos2011}, \cite{douc2013} and \cite{davis2016}, among others.

In contrast to a large body of work on count time series models, literature on such time series with zero inflation is still very sparse. \cite{chaney} argued the importance of using a zero inflated model, and  showed that  ignoring these excess zeros may lead to  poor estimation and chances of statistically significant findings being ignored. \cite{yau2004} proposed a mixed autoregressive (AR) model for zero inflated count time data and applied it to an occupational health study. The autocorrelation was modeled using a random effect. More recently, \cite{schmidt2011} used parameter driven models to fit zero inflated count time data, while \cite{yang2013} proposed a zero inflated observation driven AR model. Note, all of these zero inflated models were AR in nature, not considering the possibility of including moving average (MA) components. However, in classical time series literature we have seen that MA terms  play an important role, and approximation of such terms by AR  terms may lead to inaccurate predictions (\cite{Li1994}). Other than AR models, generalized autoregressive conditional heteroskedastic models suited for zero inflated integer-valued time series have also been used by \cite{zhu2012} and \cite{xu2020adaptive}. Very recently, a distribution-free approach for estimation of regression parameters was studied by \cite{ghahramani2020}. 
Other  techniques, like  integer autoregressive (INAR) class of models, for modelling such zero inflated count time series data modeled have also been discussed by \cite{jazi2012} and \cite{qi2019}.

In this article, we propose  a new class of observation driven models for zero inflated and overdispersed time dependent counts. Our approach combines both AR as well as MA components in the zero inflated count time series model. The counts conditioned on the past information on responses and past and possibly present covariates were assumed to follow a mixture of a Poisson distribution and a  distribution  degenerating at zero. Since, the assumption of equality of mean and variance as made by the Poisson distribution rarely holds true in practical situations,  the excess variation is modeled by a  Gamma distribution, resulting in a zero inflated negative binomial (NB)   model. Regression models with ARMA terms, along with trend, seasonality and various other covariates are fitted to the mean parameter of the NB, $\lambda_t$, and the mixing parameter, $\pi_t$, using log and logit links, respectively. The Newton-Raphson (NR) algorithm is used to maximize the conditional log-likelihood function and obtain the maximum likelihood (ML) estimators of the parameters. Due to the complicated expression of the log likelihood, we also develop an  Expectation-Maximization (EM) algorithm to provide alternative parameter estimates.  The proposed model estimators are shown to be consistent and asymptotically normal using the central limit theorem for martingales. Note for count time series ARMA models, no such asymptotic results exist in the available literature.

To motivate the necessity of including both AR and MA terms,  we use our proposed approach  to model the dynamics of dengue in one of the most densely populated cities in India. We show that the zero inflated NB-ARMA model helps us to visualise and understand  the progression of dengue over time more accurately than other models with only AR terms.   Since, dengue incidence has been related to various climatic factors (\cite{jain2019}, \cite{siriyasatien2016}),  we include the climatic factors, relative humidity, amount of rainfall, temperature in our model. A superior predictive performance of our proposed model when compared with various other available techniques for modelling dengue,  establishes the  need for using a zero inflated NB-ARMA regression model. We also fit our proposed model to the popular syphilis data (see \cite{ghahramani2020}), and compare our model's performance with other methods available in the statistical literature.

To summarise, the original contributions of this article include (i) a new ARMA type model for a zero inflated and overdispersed count time series setup, (ii) theoretical proofs of asymptotic properties of the model estimators and (iii) in-depth analysis of an unpublished Indian dengue data.

The rest of the paper is organized as follows. The proposed model is discussed in Section 2. In Section 3 we give the details on model estimation, inference and asymptotic theory. Simulation results for evaluating the estimation techniques are provided in Section 4, followed by  a detailed analysis of the zero inflated dengue data in Section 5 and syphilis data in Section 6.

\section{Proposed Statistical Model}

We model the conditional distribution of the counts at time $t$,  $ Y_{t} $, given $H_t$ and $w$, by a hierarchical mixture distribution,
%
\begin{equation}\label{A1}
Y_{t}|H_{t},w\sim \newcommand{\twopartdef}[4]

\left\{
\begin{array}{ll}
0 & \mbox{with probability } \pi_{t} \\
\text{Poisson}(\lambda_t w) & \mbox{with probability}(1-\pi_{t})
\end{array}
\right.
\end{equation}
where $w$ follows  $\text{Gamma}(k ,k)$. Here  $H_t$ is the  information available on responses till time $ (t-1) $ and on covariates till time $t$, and $w$ represents the excess variation for $t=1,\ldots,N$. 
The above equation can be re-expressed as, 
 

\begin{equation}\label{A2}
Y_{t}|H_{t}\sim \newcommand{\twopartdef}[4]

\left\{
\begin{array}{ll}
0 & \mbox{with probability } \pi_{t} \\
\text{NB}(k,\frac{1}{1+\lambda_{t}/k}) & \mbox{with probability }(1-\pi_{t}).
\end{array}
\right.
\end{equation}
or equivalently as,
\begin{equation}\label{A2.1}
Y_{t}|H_{t}=\newcommand{\twopartdef}[4]

\left\{
\begin{array}{ll}
0 & \mbox{with probability } \pi_{t}+(1-\pi_{t})(\frac{1}{1+\lambda_{t}/k})^{k}~; \\
m & \mbox{with probability } (1-\pi_{t})\frac{\Gamma(m+k)}{m!~\Gamma(k)}\Big(\frac{\lambda_{t}/k}{1+\lambda_{t}/k}\Big)^{m}\Big(\frac{1}{1+\lambda_{t}/k}\Big)^{k}
\end{array}
\right.
\end{equation}
where $m=1,2,\ldots$.	
\noindent The conditional mean and variance of $Y_{t}|H_{t}$ are $E(Y_{t}|H_{t}) =\lambda_{t}(1-\pi_{t}) =\Lambda_{t}$ and $\text{Var}(Y_{t}|H_{t}) =\lambda_{t}(1-\pi_{t})[1+\lambda_{t}\pi_{t}+\lambda_{t}/k]=\Psi_{t}$, respectively.
%
%


As in a standard GLM setup the means are  related to the linear predictor through an  invertible link function $g(\cdot)$. In the zero inflated model, there are two means, the NB mean, $\lambda_t$, and  the Bernoulli mean (mixing parameter), $\pi_t$, which are respectively modeled  as, 
 \begin{eqnarray*} \lambda_{t}&=&\exp(W_t)\\\pi_{t}&=&\frac{\exp(M_t)}{1+\exp(M_t)}.
\end{eqnarray*}

The state processes (linear predictors),  
\begin{eqnarray}\label{state_process}
W_{t}&=&x^{T}_{t}\beta+Z_{t}\nonumber\\M_{t}&=&u^{T}_{t}\delta+V_{t}\end{eqnarray}
are assumed to depend on covariates $x_{t}$ and $u_{t}$ having regression functions possibly involving: 
\begin{enumerate}
	\item Trend type terms: These regression functions have the form $x_{t}=f_{1}(t'/(N-1))$ and $u_{t}=f_{2}(t'/(N-1))$ where $f_{1}(\cdot)$ and $f_{2}(\cdot)$ are real valued functions defined on the interval $[0,1]$ and $t'=t-1$. For example, for linear trend,  $f_{1}^{T}(t'/(N-1))=[1~~t'/(N-1)]$ and $f_{2}^{T}(t'/(N-1))=[1~~t'/(N-1)]$. It is necessary to divide time by sample size, to exclude cases where the NB (Binomial) mean may be near zero or infinity (one) for large sample sizes when trend coefficient is negative or positive, respectively.
	\item Harmonic terms: These terms specify annual/half yearly/monthly/quarterly/\\weekly effects. For example, $x_{t}^{T}=[cos(2\pi t'/12)~~sin(2\pi t'/12)]$ and \\$u_{t}^{T}=[cos(2\pi t'/12)~~sin(2\pi t'/12)]$ where $t'=t-1$.
	\item Stationary processes: These terms may specify  weather series, such as humidity, rain fall, temperature etc. 
\end{enumerate}
Covariates  $x_{t}$ and $u_{t}$  may be same or different for the two linear predictors, Autoregressive and moving average terms to be added to the linear predictor are as follows: For $ t\leq 0 $, we define $ V_t=Z_t=e_{t}=0$ and for $t>0$, 
\begin{eqnarray}\label{A4.1}
Z_{t}&=&\sum_{i=1}^{p_1}\phi_{i}(Z_{t-i}+e_{t-i})+\sum_{j=1}^{q_1}\theta_{j}e_{t-j},
\\\label{A4.2}
V_{t}&=&\sum_{i=1}^{p_2}\alpha_{i}(V_{t-i}+e_{t-i})+\sum_{j=1}^{q_2}\gamma_{j}e_{t-j},
\end{eqnarray}
where zeros of the polynomials $\phi(z)=1-\sum_{i=1}^{p_{1}}\phi_{i}z^{i}, \theta(z)=1+\sum_{i=1}^{q_{1}}\theta_{i}z^{i}, \alpha(z)=1-\sum_{i=1}^{p_{2}}\alpha_{i}z^{i}$ and $\gamma(z)=1+\sum_{i=1}^{q_{2}}\gamma_{i}z^{i}$ are outside of the unit circle. The standardized error $e_t$ is defined as
\begin{equation}\label{eroor}
e_{t}=\frac{ (Y_{t}-\Lambda_{t})} {\sqrt{\Psi_{t}}}.
\end{equation}

The ARMA model gives $H_t=(x_1,\ldots,x_{t},u_1,\ldots,u_t,y_1,\ldots,y_{t-1},\Lambda_1,\ldots,\Lambda_{t-1},\\\Psi_1,\ldots,\Psi_{t-1})$. In general, we assume that the lengths of $x_t$ and $u_t$ are $n_1$ and $n_2$, respectively, while the  unknown parameters $ \beta $, $ \phi $, $ \theta $, $ \delta $, $ \alpha $ and $ \gamma $ are vectors of lengths $ n_{1}, p_{1}, q_{1}, n_{2}, p_{2}$ and $ q_{2}$, respectively.
\subsection{Some properties of the state processes}
The ARMA models for $Z_t$ and $V_t$ can be re-written as the following $MA(\infty)$ models \citep{Davis2003},
\begin{eqnarray}\label{A4.1MA}
Z_{t}&=&\sum_{j=1}^{\infty}\theta_{j}e_{t-j}\\
\label{A4.2MA}
V_{t}&=&\sum_{j=1}^{\infty}\gamma_{j}e_{t-j}.
\end{eqnarray}
Now, for $t\geq 1$, the standardized error, $e_t$, has expectation $0$ and variance 1.  Also, the independence of  $Y_t|H_t$ makes the  errors  uncorrelated, i.e.,  $Cov(e_t,e_s)=0,\,t\neq s$. Using these  error properties, we obtain,
\begin{align*}
E(W_{t})=x^{T}_{t}\beta,~~~~Var(W_t)=\sum_{j=1}^{\infty}\theta^{2}_{j}=c_{1} \end{align*}
and, for $s=t+h, h>0,$
$cov(W_t,W_s) = \sum_{j=1}^{\infty}\theta_{j}\theta_{j+h}=c_{2}. 
$
Similarly,
\begin{equation*}
E(M_{t})=u^{T}_{t}\delta, ~~~~~~Var(M_{t})= \sum_{j=1}^{\infty}\gamma_{j}^{2} =c_{3}, ~~~~~~cov(M_t,M_s) = \sum_{j=1}^{\infty}\gamma_{j}\gamma_{j+h}=c_{4},
\end{equation*}
where $c_{1},c_{2}$, $c_{3}$ and $c_4$'s are constants. Hence, the variances and covariances of the two state processes are  time independent.

For the simplest case of a MA(1) model for both $W_t$ and $M_t$, it can be shown that the bivariate Markov chain $R_t=(W_{t},M_t)$ is bounded in probability, and therefore has at least one stationary distribution. The proof of this result is given in Supplementary materials. However, note that extension of this result to higher order models is very complex and is not shown here. 

\section{Estimation} 
Model fitting is done by ML using iteratively reweighted least squares, as in the context of standard GLMs. As the covariates maybe stochastic, 
from  the conditional density of $Y_t|H_t$  in  (\ref{A2.1}) we get the partial log-likelihood function to be \citep{fokianos2003},
\begin{eqnarray}
\nonumber&&PL(\Theta)=\sum_{t=1}^{N}PL_{t}(\Theta) =\sum_{y_{t=0}} \log (\pi_{t}+(1-\pi_{t})\tilde{p}_t^{k} ) 
+\sum_{y_{t>0}}\Big[\log(1-\pi_{t}) \\&&+ \log\Gamma(k+y_{t}) - \log\Gamma(k) -\log y_{t}! +k\log \tilde{p}_t + y_{t}\log (1-\tilde{p}_t)\Big],
\label{A5}
\end{eqnarray}
where $\Theta=(\beta,\phi, \theta, \delta,\alpha,\gamma, k)$ is the set of all $p(=n_1+p_1+q_1+n_2+p_2+q_2+1)$ parameters. For the purpose of simplifying the notations,  $\tilde{p}_t$ is defined to be $\frac{k}{k+\lambda_{t}}$ and $\nu=(\beta,\phi, \theta, \delta,\alpha,\gamma)$. 
Note, $\nu$ represents the model parameters in the processes $W_t$ and $M_t$, where
\begin{eqnarray*}W_t&=&x^{T}_{t}\beta+\sum_{i=1}^{p_1}\phi_{i}(Z_{t-i}+e_{t-i})+\sum_{j=1}^{q_1}\theta_{j}e_{t-j}\\M_{t}&=&u^{T}_{t}\delta+\sum_{i=1}^{p_2}\alpha_{i}(V_{t-i}+e_{t-i})+\sum_{j=1}^{q_2}\gamma_{j}e_{t-j},\end{eqnarray*} 
while $k$ expresses the overdispersion. The  score functions are as follows:
\begin{eqnarray}
&&\frac{\partial PL}{\partial \nu}=\sum_{y_{t}=0}\frac{1} {\pi_{t}+(1-\pi_{t})\tilde{p}_t^{k} } \Big[\frac{\partial\pi_{t}}{\partial\nu}+ (1-\pi_{t})k \tilde{p}_t^{k-1}\frac{\partial \tilde{p}_t}{\partial \nu} - \tilde{p}_t^{k}\frac{\partial\pi_{t}}{\partial\nu}\Big]\nonumber \\
&+&\sum_{y_{t}>0}\Big[-\frac{1}{1-\pi_{t}}\frac{\partial \pi_{t} }{\partial \nu} + \frac{k}{\tilde{p}_t}\frac{\partial \tilde{p}_t }{\partial \nu} - \frac{y_{t}}{1-\tilde{p}_t}\frac{\partial \tilde{p}_t }{\partial \nu}\Big],
\label{fdnu}\\\nonumber\\
&&\frac{\partial PL}{\partial k}=\sum_{y_{t}=0}\frac{1} {\pi_{t}+(1-\pi_{t})\tilde{p}_t^{k} } \Big[\frac{\partial\pi_{t}}{\partial k}+ (1-\pi_{t})\tilde{p}_t^{k}\Big(\frac{k}{\tilde{p}_t}\frac{\partial \tilde{p}_t}{\partial k}+\log \tilde{p}_t\Big) -\tilde{p}_t^{k}\frac{\partial\pi_{t}}{\partial k}\Big]\nonumber \\
&+&\sum_{y_{t}>0}\Big[-\frac{1}{1-\pi_{t}}\frac{\partial \pi _{t} }{\partial k} 
+\psi_{0}(k+y_{t}) - \psi_{0}(k)+
 \frac{k}{\tilde{p}_t}\frac{\partial \tilde{p}_t }{\partial k} 
 + \log \tilde{p}_t
 - \frac{y_{t}}{1-\tilde{p}_t}\frac{\partial \tilde{p}_t }{\partial k}\Big],
\label{fdk}
\end{eqnarray}
where $\psi_{0}(\cdot)$ is a digamma function. For details see  Supplementary materials. 

The solutions of the score equations, $\frac{\partial PL}{\partial \nu}=0$ and $\frac{\partial PL}{\partial k}=0$ are denoted respectively by $\hat{\nu}$ and $\hat{k}$, however the estimators do not have closed form expressions necessitating the use of the  NR algorithm as follows,

\begin{equation*}
  \widehat{\Theta}^{(j+1)}=\widehat{\Theta}^{(j)}+\tilde{H}_{N}^{-1}(\widehat{\Theta}^{(j)})S_{N}(\widehat{\Theta}^{(j)}),
  \end{equation*}
where $S_{N}({\Theta})=\frac{\partial PL}{\partial \Theta}$ is the score function and $\tilde{H}_{N}({\Theta})=\frac{\partial^{2} PL}{\partial \Theta \partial \Theta^{T}}$ is the observed information matrix. Detailed derivations of the components of $\tilde{H}_{n}({\Theta})$ are in Supplementary materials. 

\subsection{EM algorithm}\label{EM}
Due to the complicated  score equations, \cite{Lambert1992}  suggested an alternative use of the EM algorithm (\cite{dempster}). Suppose, it is possible to observe a Bernoulli variable $S_t$, which takes value $1 $ when $ Y_{t} $ is from the perfect zero state and is $0$ when $ Y_{t} $ is from the NB state.  Then,  
%
the partial log-likelihood with complete data $ \boldsymbol{y}:=(y_{1}, y_{2}, \dots, y_{N}) $ and $ \boldsymbol{s}:=(s_{1}, s_{2}, \dots, s_{N}) $ can be written as,
\begin{eqnarray}
&&PL^{c}(\Theta) =\sum_{t=1}^{N} \mbox{log}(f(s_{t}|H_{t}))+ \sum_{t=1}^{N} \mbox{log}(f(Y_{t}|s_{t}, H_{t}))\nonumber\\&=&\sum_{t=1}^{N} s_{t}\log (\pi_{t})+(1-s_{t})\log(1-\pi_{t}) \nonumber\\
&+ &\sum_{t=1}^{N}(1-s_{t})\Big[\log\Gamma(k+y_{t}) - \log\Gamma(k) -\log y_{t}! +k\log \tilde{p}_t + y_{t}\log (1-\tilde{p}_t)\Big].
\end{eqnarray}
In the E-step 
the expectation of $ PL_{t}^{c}(\Theta) $ is computed with respect to the observed count series  $\boldsymbol{y}$ and current parameter estimates. The $(i+1)$th iteration yields,
\begin{align}
Q(\Theta|\Theta^{(i)})=&E\{PL^{c}(\Theta)|\boldsymbol{y},\Theta^{(i)}\}=\sum_{t=1}^{N} \hat{s}^{(i)}_{t}\log (\pi_{t})+(1-\hat{s}^{(i)}_{t})\log(1-\pi_{t}) \nonumber\\
+ \sum_{t=1}^{N}&(1-\hat{s}^{(i)}_{t})\Big[\log\Gamma(k+y_{t}) - \log\Gamma(k) -\log y_{t}! +k\log \tilde{p}_t + y_{t}\log (1-\tilde{p}_t)\Big],
\end{align}
\begin{eqnarray*}
\text{where }\hat{s}^{(i)}_{t} &=&\text{Pr}(s_{t}=1|Y_{t}=y_{t},H_t,\Theta^{(i)})\\&=&\frac{\text{Pr}(s_{t}=1|\Theta^{(i)})\text{Pr}(Y_{t}=y_{t}|H_t,s_{t}=1,\Theta^{(i)})}{\text{Pr}(Y_{t}=y_{t}|H_t,\Theta^{(i)})}\\&=&\frac{\pi_{t}^{(i)}I_{\{y_{t}=0\}}}{\pi_{t}^{(i)}I_{\{y_{t}=0\}}+(1-\pi_{t}^{(i)})\frac{\Gamma(y_{t}+k)}{y_{t}!\Gamma(k)}\Big(\frac{\lambda_{t}^{(i)}/k}{1+\lambda_{t}^{(i)}/k}\Big)^{y_{t}}\Big(\frac{1}{1+\lambda_{t}^{(i)}/k}\Big)^{k}}.			
\end{eqnarray*} 

In the {M-step}, $ \Theta^{(i+1)} $ is found by maximizing $ Q(\Theta;\Theta^{(i)}) $.
The NR algorithm estimates $ \Theta^{(k+1)} $ as described at the end of the previous section.
The detailed derivations of the components $T_{n}({\Theta})=\frac{\partial Q(\Theta;\Theta^{(i)})}{\partial \Theta }$ and $J_{n}({\Theta})=\frac{\partial^{2} Q(\Theta;\Theta^{(i)})}{\partial \Theta \partial \Theta'}$ are given in Supplementary materials. The sequence $ \{\Theta_{j}: j=1,2,\dots\} $ generated by the EM algorithm is a convergent sequence and it converges to local maximizer of  (\ref{A5}).

\subsection{Asymptotic Theory}
The consistency and asymptotic normality of the maximum partial likelihood estimator $\hat{\Theta}=[\hat{\nu}^{T}~~\hat{k}]^{T}$ is addressed in this section. We rewrite the score functions \eqref{fdnu} as $S_{N}(\nu)=\sum_{t=1}^{N}S^{t}(\nu)$ and $S_{N}(k)=\sum_{t=1}^{N}S^{t}(k)$ where
\begin{align}\label{score1}
S^{t}(\nu)=&\frac{y_{0t}} {\pi_{t}+(1-\pi_{t})\tilde{p}_t^{k}} \left[\frac{\partial\pi_{t}}{\partial\nu}+ (1-\pi_{t})k \tilde{p}_t^{k-1}\frac{\partial \tilde{p}_t}{\partial \nu} - \tilde{p}_t^{k}\frac{\partial\pi_{t}}{\partial\nu}\right]\nonumber \\+&(1-y_{0t})\left[-\frac{1}{1-\pi_{t}}\frac{\partial \pi_{t} }{\partial \nu} + \frac{k}{\tilde{p}_t}\frac{\partial \tilde{p}_t }{\partial \nu} - \frac{y_{t}}{1-\tilde{p}_t}\frac{\partial \tilde{p}_t }{\partial \nu}\right],
\end{align}
and
\begin{align}\label{score2}
S^{t}(k)=&\frac{y_{0t}} {\pi_{t}+(1-\pi_{t})\tilde{p}_t^{k}} \left[\frac{\partial\pi_{t}}{\partial k}+ (1-\pi_{t})\tilde{p}_t^{k}\Big(\frac{k}{\tilde{p}_t}\frac{\partial \tilde{p}_t}{\partial k}+\log \tilde{p}_t\Big) -\tilde{p}_t^{k}\frac{\partial\pi_{t}}{\partial k}\right]\nonumber \\+(1-y_{0t}&)\left[-\frac{1}{1-\pi_{t}}\frac{\partial \pi _{t} }{\partial k} 
+\psi_{0}(k+y_{t}) - \psi_{0}(k)+
\frac{k}{\tilde{p}_t}\frac{\partial \tilde{p}_t }{\partial k} 
+ \log \tilde{p}_t
- \frac{y_{t}}{1-\tilde{p}_t}\frac{\partial \tilde{p}_t }{\partial k}\right],
\end{align}
here $y_{0t}:=I(y_{t}=0)$ is an indicator function. Let us consider $E\left[S^{t}(\nu)|H_{t}\right]$ and $E\left[S^{t}(k)|H_{t}\right] $ which help in proving the consistency and asymptotic normality of the maximum partial likelihood estimator $\hat{\Theta}$.
\begin{align}\label{expscr17}
E\left[S^{t}(\nu)|H_{t}\right] =& E\Big\{\frac{y_{0t}} {\pi_{t}+(1-\pi_{t})\tilde{p}_t^{k}} \left[\frac{\partial\pi_{t}}{\partial\nu}+ (1-\pi_{t})k \tilde{p}_t^{k-1}\frac{\partial \tilde{p}_t}{\partial \nu} - \tilde{p}_t^{k}\frac{\partial\pi_{t}}{\partial\nu}\right]\nonumber \\&\hspace{0.2cm}+(1-y_{0t})\left[-\frac{1}{1-\pi_{t}}\frac{\partial \pi_{t} }{\partial \nu} + \frac{k}{\tilde{p}_t}\frac{\partial \tilde{p}_t }{\partial \nu} - \frac{y_{t}}{1-\tilde{p}_t}\frac{\partial \tilde{p}_t }{\partial \nu}\right]\vert H_{t}\Big\}\nonumber\\
= &\frac{E[y_{0t}|H_{t}]} {\pi_{t}+(1-\pi_{t})\tilde{p}_t^{k}} \left[\frac{\partial\pi_{t}}{\partial\nu}+ (1-\pi_{t})k \tilde{p}_t^{k-1}\frac{\partial \tilde{p}_t}{\partial \nu} - \tilde{p}_t^{k}\frac{\partial\pi_{t}}{\partial\nu}\right]\nonumber \\&\hspace{0.2cm}+E[(1-y_{0t})|H_{t}]\left[-\frac{1}{1-\pi_{t}}\frac{\partial \pi_{t} }{\partial \nu} + \frac{k}{\tilde{p}_t}\frac{\partial \tilde{p}_t }{\partial \nu}\right] - \frac{E[y_{t}|H_{t}]}{1-\tilde{p}_t}\frac{\partial \tilde{p}_t }{\partial \nu}.
\end{align}
Substituting $E[y_{0t}|H_{t}] = \pi_{t}+(1-\pi_{t})\tilde{p}_t^{k}$, $E[(1-y_{0t})|H_{t}] = (1-\pi_{t})(1-\tilde{p}_t^{k})$ and $E[y_{t}|H_{t}] = \lambda_{t}(1-\pi_{t})$ in \eqref{expscr17} above, we obtain
\begin{align}\label{expscr}
E\left[S^{t}(\nu)|H_{t}\right] =&~\frac{\partial\pi_{t}}{\partial\nu}+ (1-\pi_{t})k \tilde{p}_t^{k-1}\frac{\partial \tilde{p}_t}{\partial \nu} - \tilde{p}_t^{k}\frac{\partial\pi_{t}}{\partial\nu}\nonumber \\&\hspace{0.2cm}+(1-\pi_{t})(1-\tilde{p}_t^{k})\left[-\frac{1}{1-\pi_{t}}\frac{\partial \pi_{t} }{\partial \nu} + \frac{k}{\tilde{p}_t}\frac{\partial \tilde{p}_t }{\partial \nu}\right] - \frac{\lambda_{t}(1-\pi_{t})}{1-\tilde{p}_t}\frac{\partial \tilde{p}_t }{\partial \nu}\nonumber\\=&~ 0.
\end{align}
Also,
\begin{align}\label{expscr2}
E\left[S^{t}(k)|H_{t}\right] = E\Big\{&\frac{y_{0t}} {\pi_{t}+(1-\pi_{t})\tilde{p}_t^{k}}\left[\frac{\partial\pi_{t}}{\partial k}+ (1-\pi_{t})\tilde{p}_t^{k}\Big(\frac{k}{\tilde{p}_t}\frac{\partial \tilde{p}_t}{\partial k}+\log \tilde{p}_t\Big) -\tilde{p}_t^{k}\frac{\partial\pi_{t}}{\partial k}\right]\nonumber \\
+(1-y_{0t})\Big[-\frac{1}{1-\pi_{t}}&\frac{\partial \pi _{t} }{\partial k} 
+\psi_{0}(k+y_{t}) - \psi_{0}(k)+
\frac{k}{\tilde{p}_t}\frac{\partial \tilde{p}_t }{\partial k} 
+ \log \tilde{p}_t
- \frac{y_{t}}{1-\tilde{p}_t}\frac{\partial \tilde{p}_t }{\partial k}\Big]|H_{t}\Big\},
\nonumber\\
= &\frac{E[y_{0t}|H_{t}]} {\pi_{t}+(1-\pi_{t})\tilde{p}_t^{k}} \left[\frac{\partial\pi_{t}}{\partial k}+ (1-\pi_{t})\tilde{p}_t^{k}\Big(\frac{k}{\tilde{p}_t}\frac{\partial \tilde{p}_t}{\partial k}+\log \tilde{p}_t\Big) -\tilde{p}_t^{k}\frac{\partial\pi_{t}}{\partial k}\right]\nonumber \\&\quad+E[(1-y_{0t})|H_{t}]\left[-\frac{1}{1-\pi_{t}}\frac{\partial \pi _{t} }{\partial k} 
+\frac{k}{\tilde{p}_t}\frac{\partial \tilde{p}_t }{\partial k} 
+ \log \tilde{p}_t\right] \nonumber\\&\quad+ E[(1-y_{0t})\left[\psi_{0}(k+y_{t}) - \psi_{0}(k)\right]|H_{t}] - \frac{E[y_{t}|H_{t}]}{1-\tilde{p}_t}\frac{\partial \tilde{p}_t }{\partial k}.
\end{align}
Substituting $E[y_{0t}|H_{t}]$, $E[(1-y_{0t})|H_{t}] $ and $E[y_{t}|H_{t}] $ we get,
\begin{align}\label{expscr3}
E\left[S^{t}(k)|H_{t}\right]
=& (1-\pi_{t})\log \tilde{p}_t + E[(1-y_{0t})\left[\psi_{0}(k+y_{t}) -  \psi_{0}(k)\right]|H_{t}],\nonumber\\
=& (1-\pi_{t})\log \tilde{p}_t + E\left[(1-y_{0t})\sum_{l=1}^{y_{t}-1}\Big(\frac{1}{k+l-1}\Big)|H_{t}\right],\nonumber\\
=& (1-\pi_{t})\log \tilde{p}_t + E\left[(1-y_{0t})log\Big(1+\frac{2y_{t}}{2k-1}\Big)|H_{t}\right]  
\end{align}
The second part $E\left[(1-y_{0t})log\Big(1+\frac{2y_{t}}{2k-1}\Big)|H_{t}\right]$ in \eqref{expscr3} is complicated. Thus, the consistency and asymptotic normality of the maximum partial likelihood estimator $\hat{\nu}$ with known overdispersion parameter $k$ is addressed in this section.

Let $S_{1}^{t}:=\frac{1}{\pi_{t}+(1-\pi_{t})\tilde{p}_t^{k}} \Big[\frac{\partial\pi_{t}}{\partial\nu}+ (1-\pi_{t})k \tilde{p}_t^{k-1}\frac{\partial \tilde{p}_t}{\partial \nu} - \tilde{p}_t^{k}\frac{\partial\pi_{t}}{\partial\nu}\Big]$ and $S_{2}^{t}:=-\frac{1}{1-\pi_{t}}\frac{\partial \pi_{t} }{\partial \nu} + \frac{k}{\tilde{p}_t}\frac{\partial \tilde{p}_t }{\partial \nu}$. Then, the conditional information matrix is
\begin{align}
G_{N}(\nu) =& \sum_{t=1}^{N}Var[S^{t}(\nu)|H_{t}]\nonumber\\ 
=&\sum_{t=1}^{N}Var\left[y_{0t}S_{1}^{t}
+(1-y_{0t})S_{2}^{t}- \frac{y_{t}}{1-\tilde{p}_t}\frac{\partial \tilde{p}_t }{\partial \nu}|H_{t}\right]\nonumber\\
=&\sum_{t=1}^{N}\Big\{(S_{1}^{t}-S_{2}^{t})(S_{1}^{t}-S_{2}^{t})^{T}Var(y_{0t}|H_{t})+ \frac{Var(y_{t}|H_{t})}{1-\tilde{p}_t}\frac{\partial \tilde{p}_t }{\partial \nu}\big(\frac{\partial \tilde{p}_t }{\partial \nu}\big)^{T}\Big\},
\end{align}
where $Var(y_{0t}|H_{t})=(\pi_{t}+(1-\pi_{t})\tilde{p}_t^{k})(1-\pi_{t})(1-\tilde{p}_t^{k})$ and $Var(y_{t}|H_{t})=\lambda_{t}(1-\pi_{t})(1+\lambda_{t}\pi_{t}+\lambda_{t}/k)$. The unconditional information matrix is $F_{N}(\nu):=E[G_{N}(\nu)]$.

The consistency and the asymptotic normality of $\hat{\nu}$ for known $k$ is proved under the following assumptions, which are slight modifications of those stated in Fokianos and Kedem (\citeyear{fokianos1998,fokianos2003}).
\begin{enumerate}
	\item[A1.] The true parameter $\nu$ belongs to an open set $\mathcal{N}\subseteq\mathbb{R}^{n_1+p_{1}+q_{1}+n_2+p_{2}+ q_{2}}$.
	\item[A2.] The linear predictors $W_{t}$ and $M_{t}$ are geometrically ergodic processes. 
	\item[A3.] Using matrix notations, we write the the linear predictors $(W_t,M_t)$ as 
\begin{eqnarray*}
W_t=\tilde{W'}_t\Theta_1 \text{ and }
M_t=\tilde{M'}_t\Theta_2, 
\end{eqnarray*}
where $\Theta_1=(\beta,\phi,\theta)$ and $\Theta_2=(\delta,\alpha,\gamma)$. 
The covariate vectors $\tilde{W}_t$ and $\tilde{M}_t$ almost surely lies in a nonrandom compact subset $\Gamma_1\subseteq \mathbb{R}^{n_1+p_{1}+q_{1}}$ and $\Gamma_2\subseteq \mathbb{R}^{n_2+p_{2}+q_{2}}$, respectively.
Also, $\tilde{W'}_t\Theta_1$ and $\tilde{M'}_t\Theta_2$ lie almost surely in the domain of their respective inverse link functions (i.e., log and logit) for all $\tilde{W}_t \in \Gamma_1$, $\tilde{M}_t\in \Gamma_2$, $\Theta_1$ and $\Theta_2$.
	\item[A4.] The matrix $G_{N}(\nu)$ is a positive definite matrix with probability 1. Also, $G(\nu)$ (defined below equation \ref{conditional_inf}) is a positive definite matrix at the true values of $\nu$ and therefore its inverse exists.
\end{enumerate}
From assumption A2 and the law of large numbers for geometrically ergodic processes (see \cite{jensen2007law} and \cite{fokianos2009poisson}), 
\begin{align}\label{conditional_inf}
\frac{G_{N}(\nu)}{N}~\xrightarrow[]{\text{~~P~~}}~G(\nu),
\end{align}
as $N\rightarrow\infty$. Thus, the conditional information matrix $G_{N}(\nu)$ has a non-random limit $G(\nu)$.  Here $G(\nu):=\lim_{N\to\infty}\frac{1}{N}\sum_{t=1}^{N}\Big\{(S_{1}^{t}-S_{2}^{t})(S_{1}^{t}-S_{2}^{t})^{T}Var(y_{0t}|H_{t})\\+ \frac{Var(y_{t}|H_{t})}{1-\tilde{p}_t}\frac{\partial \tilde{p}_t }{\partial \nu}\big(\frac{\partial \tilde{p}_t }{\partial \nu}\big)^{T}\Big\}$ $=E\left[(S_{1}^{t}-S_{2}^{t})(S_{1}^{t}-S_{2}^{t})^{T}Var(y_{0t}|H_{t})+ \frac{Var(y_{t}|H_{t})}{1-\tilde{p}_t}\frac{\partial \tilde{p}_t }{\partial \nu}\big(\frac{\partial \tilde{p}_t }{\partial \nu}\big)^{T}\right]$, for geometrically ergodic processes (see \cite{jensen2007law} and \cite{fokianos2009poisson}).

\begin{theorem}\label{score_MDS}
	Consider the state processes, $W_t$ and $M_t$ in \eqref{state_process} and suppose assumptions A1-A4 hold true. Then, 
	$S_{N}(\nu)$ converges in distribution to a normal distribution, $\mathcal{N}(0, G(\nu))$, as $N\rightarrow\infty$. 
\end{theorem}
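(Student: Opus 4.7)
The approach is to recognize that $\{S^t(\nu), \mathcal{F}_t\}_{t \geq 1}$ with $\mathcal{F}_t = \sigma(H_{t+1})$ is a martingale difference sequence and to apply the central limit theorem for square-integrable martingales (Hall and Heyde, Corollary 3.1). The martingale difference property is essentially free: equation \eqref{expscr} already shows $E[S^t(\nu)\mid H_t]=0$, and each $S^t(\nu)$ is a measurable function of $H_t$ together with $y_t$, hence $\mathcal{F}_{t+1}$-measurable. So the work reduces to verifying the two standard hypotheses of the martingale CLT for the normalized sum $N^{-1/2}\sum_{t=1}^N S^t(\nu)$.

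The first hypothesis is convergence of the conditional covariation: $N^{-1}\sum_{t=1}^N \mathrm{Var}(S^t(\nu)\mid H_t) = G_N(\nu)/N \xrightarrow{P} G(\nu)$. This is essentially handed to us by the display \eqref{conditional_inf}, which is itself a consequence of assumption A2 (geometric ergodicity of $W_t, M_t$) combined with the ergodic law of large numbers of Jensen and Rahbek (2007) and Fokianos et al.~(2009); positive definiteness of the limit (A4) keeps the limit nondegenerate. I would only need to note that the summands of $G_N(\nu)$ are continuous functions of $(W_t,M_t, y_{t-1},\ldots)$ in a form covered by those LLNs.

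The second hypothesis is a conditional Lindeberg (or stronger Lyapunov) condition. I would prove the Lyapunov version: for some $\delta>0$,
\begin{equation*}
\frac{1}{N^{1+\delta/2}}\sum_{t=1}^{N} E\bigl[\|S^t(\nu)\|^{2+\delta}\,\big|\, H_t\bigr] \xrightarrow{P} 0.
\end{equation*}
The argument rests on assumption A3: compactness of the support of $\tilde{W}_t,\tilde{M}_t$ and of the parameter space, combined with the log and logit inverse links, forces $\lambda_t, \tilde{p}_t, 1-\tilde{p}_t$ to stay inside a compact subset of $(0,\infty)$ and $\pi_t, 1-\pi_t$ to stay inside a compact subset of $(0,1)$ almost surely. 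Consequently the coefficients multiplying $y_t$ and $y_{0t}$ in $S^t(\nu)$ (see \eqref{score1}) are uniformly bounded, and the only stochastic piece of unbounded range is $y_t$ itself. But under these bounds, the conditional zero-inflated NB law of $y_t\mid H_t$ has all moments, uniformly bounded in $t$, so $E[\|S^t(\nu)\|^{2+\delta}\mid H_t]\le C<\infty$ a.s., which delivers the Lyapunov condition with any $\delta>0$.

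The main obstacle is the uniform control of $\partial \pi_t/\partial \nu$ and $\partial \tilde p_t/\partial \nu$, because these depend on $\partial W_t/\partial\nu$ and $\partial M_t/\partial\nu$, which by \eqref{A4.1}--\eqref{A4.2} satisfy ARMA-type recursions driven by the standardized errors $e_s$. I would use the invertibility conditions on the polynomials $\phi(z),\theta(z),\alpha(z),\gamma(z)$ (roots outside the unit circle) to write these derivatives as geometrically summable $MA(\infty)$ expansions in past $e_s$, as in \eqref{A4.1MA}--\eqref{A4.2MA}; together with the uniform moment bounds on $e_s$ just established, this gives $L^{2+\delta}$ bounds on the derivative processes uniformly in $t$, closing the argument. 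Once both hypotheses are in place, the martingale CLT yields $N^{-1/2}S_N(\nu)\xrightarrow{d}\mathcal{N}(0, G(\nu))$, the assertion of Theorem \ref{score_MDS}.
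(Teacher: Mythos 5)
Your proposal is correct and follows essentially the same route as the paper: apply the Hall--Heyde martingale CLT (Corollary 3.1), get the martingale-difference property from \eqref{expscr}, take the convergence of $G_N(\nu)/N$ from \eqref{conditional_inf}, and verify Lindeberg via a Lyapunov-type moment bound (the paper uses the fourth conditional moment, i.e.\ your condition with $\delta=2$, justified by A2, A3 and H\"older's inequality). Your added care about uniformly controlling $\partial\pi_t/\partial\nu$ and $\partial\tilde p_t/\partial\nu$ through the invertibility of the ARMA polynomials is a more explicit treatment of a step the paper leaves implicit, but it is not a different argument.
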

\begin{proof}
	The central limit theorem (CLT) for martingales (Corollary 3.1 of \cite{hall2014martingale}) will be used to prove the asymptotic normality of   $S_{N}(\nu)$. To use the corollary, we first need to verify the following:
	\begin{enumerate}[i.]
		\item The score function is a martingale difference sequence i.e., $E[S^{t}(\nu)|H_{t}]=0$ and $E[\Vert S^{t}(\nu)\Vert]<\infty$ where $||.||$ is a vector norm (see page 230 of \cite{davidson1994}),
		\item Lindeberg's condition i.e., for all $\epsilon>0$, \\$\frac{1}{N}\sum_{t=1}^{N}E\left[\Vert S^{t}(\nu)\Vert^{2} I(\Vert S^{t}(\nu)\Vert>\epsilon)|H_{t}\right]\xrightarrow[]{~P~}0$, as $N\rightarrow\infty$, and
		\item The normalized conditional information matrix $\frac{G_{N}(\nu)}{N}$ converges in probability to a non-random limit $G(\nu)$.
	\end{enumerate}
From \eqref{expscr}, we know that $E\left[S^{t}(\nu)|H_{t}\right]=0$. Next consider, 
\begin{align}
E\left[\Vert S^{t}(\nu)\Vert\right] \leq &~E\left\lVert\frac{y_{0t}} {\pi_{t}+(1-\pi_{t})\tilde{p}_t^{k}} \left[\frac{\partial\pi_{t}}{\partial\nu}+ (1-\pi_{t})k \tilde{p}_t^{k-1}\frac{\partial \tilde{p}_t}{\partial \nu} - \tilde{p}_t^{k}\frac{\partial\pi_{t}}{\partial\nu}\right]\right\rVert\nonumber \\&\hspace{0.2cm}+E\left\lVert(1-y_{0t})\left[-\frac{1}{1-\pi_{t}}\frac{\partial \pi_{t} }{\partial \nu} + \frac{k}{\tilde{p}_t}\frac{\partial \tilde{p}_t }{\partial \nu} - \frac{y_{t}}{1-\tilde{p}_t}\frac{\partial \tilde{p}_t }{\partial \nu}\right]\right\rVert\nonumber\\
\leq &~E\left[\frac{y_{0t}} {\pi_{t}+(1-\pi_{t})\tilde{p}_t^{k}}\right] E\left\lVert\frac{\partial\pi_{t}}{\partial\nu}+ (1-\pi_{t})k \tilde{p}_t^{k-1}\frac{\partial \tilde{p}_t}{\partial \nu} - \tilde{p}_t^{k}\frac{\partial\pi_{t}}{\partial\nu}\right\rVert\nonumber \\&\hspace{0.2cm}+E[1-y_{0t}]E\left\lVert-\frac{1}{1-\pi_{t}}\frac{\partial \pi_{t} }{\partial \nu} + \frac{k}{\tilde{p}_t}\frac{\partial \tilde{p}_t}{\partial \nu}\right\rVert +  E\left[\frac{y_{t}}{1-\tilde{p}_t}\right]E\left\lVert\frac{\partial\tilde{p}_t }{\partial \nu}\right\rVert,
\end{align}
applying the law of total expectation to the above equation, we get \begin{align}
E\left[\Vert S^{t}(\nu)\Vert\right] \leq &~E\left\lVert\frac{\partial\pi_{t}}{\partial\nu}+ (1-\pi_{t})k \tilde{p}_t^{k-1}\frac{\partial \tilde{p}_t}{\partial \nu} - \tilde{p}_t^{k}\frac{\partial\pi_{t}}{\partial\nu}\right\rVert \nonumber\\&\hspace{-1cm}+ E[(1-\pi_{t})(1-\tilde{p}_{t})]E\left\lVert-\frac{1}{1-\pi_{t}}\frac{\partial \pi_{t} }{\partial \nu} + \frac{k}{\tilde{p}_t}\frac{\partial \tilde{p}_t }{\partial \nu}\right\rVert+E\left[\frac{\lambda_{t}(1-\pi_{t})}{1-\tilde{p}_t}\right]E\left\lVert\frac{\partial \tilde{p}_t }{\partial \nu}\right\rVert
\end{align}
Using assumptions A1 and A2 along with the fact $e_t=0$ for $t\leq 0$ in \eqref{state_process}, $E[W_t]$ and $E[M_{t}]$
are finite,
implying  $E[\lambda_{t}]<\infty$, $E[\pi_{t}]<\infty$. 
Using Holders inequality (\cite{hardy1934inequalities}), we can then conclude that $E\left[\Vert S^{t}(\nu)\Vert\right] <\infty$. Therefore the score function $S^{t}(\nu)$ in \eqref{score1} is a martingale difference sequence. 

For proving Lindeberg's condition, we consider
\begin{align}\label{Lindeberg}
\frac{1}{N}\sum_{t=1}^{N}E\big[\Vert S^{t}(\nu)\Vert^{2} I(\Vert S^{t}(\nu)\Vert>\sqrt{N}\delta)|H_{t}\big]\leq\frac{1}{N^{2}\delta^{2}}\sum_{t=1}^{N}E\big[\Vert S^{t}(\nu)\Vert^{4}|H_{t}\big]\xrightarrow[]{~P~}0,
\end{align}
as $N\rightarrow\infty$, since $E\big[\Vert S^{t}(\nu)\Vert^{4}\big]<\infty$ which follows from assumptions A2, A3 and Holders inequality. The convergence in probability in \eqref{Lindeberg} follows from the law of large numbers for geometrically ergodic processes (\cite{jensen2007law} and \cite{fokianos2009poisson}).

Next, the condition (iii) follows from \eqref{conditional_inf}. Therefore using the CLT for martingales, we can therefore say that the partial score function $S_{N}(\nu)$ converges in distribution to a normal random variable with zero mean and variance $G(\nu)$.
\end{proof}

Expressing the information matrix $\tilde{H}_{N}(\nu)$ as $\tilde{H}_{N}(\nu)=G_{N}(\nu)-R_{N}(\nu)$, where $R_{N}(\nu)$ is a remainder term. Next we will show that the normalized information matrix $\frac{\tilde{H}_{N}(\nu)}{N}$ converges in probability to $G(\nu)$. 
\begin{lemma}\label{continuity}
	Under the assumptions A1-A4, $\frac{\tilde{H}_{N}(\nu)}{N}\xrightarrow{P}G(\nu)$ as $N\rightarrow\infty$.
\end{lemma}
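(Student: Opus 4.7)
The plan is to use the decomposition $\tilde{H}_{N}(\nu) = G_{N}(\nu) - R_{N}(\nu)$ stated just before the lemma. Since \eqref{conditional_inf} already gives $G_{N}(\nu)/N \xrightarrow{P} G(\nu)$, it suffices to prove that $R_{N}(\nu)/N \xrightarrow{P} 0$; the conclusion then follows by simple addition of limits in probability.

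First, I would write $R_{N}(\nu) = \sum_{t=1}^{N} R^{t}(\nu)$ where each summand is obtained by differentiating the score contribution $S^{t}(\nu)$ in \eqref{score1} with respect to $\nu^{T}$ and subtracting the conditional-variance expression that defines the corresponding summand of $G_{N}(\nu)$. Because, conditionally on $H_{t}$, the law of $y_{t}$ is a parametric mixture (point mass at $0$ plus NB) whose density is twice differentiable in $\nu$ with the usual regularity permitting interchange of differentiation and integration, the conditional Bartlett identity
\[
E\!\left[-\frac{\partial S^{t}(\nu)}{\partial \nu^{T}}\,\Big|\,H_{t}\right] \;=\; \operatorname{Var}[S^{t}(\nu)|H_{t}]
\]
holds, which is equivalent to $E[R^{t}(\nu)\mid H_{t}] = 0$. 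Hence $\{R^{t}(\nu)\}$ is a martingale difference sequence with respect to the filtration $\{H_{t}\}$. Concretely, after substituting $E[y_{0t}|H_{t}]$, $E[y_{t}|H_{t}]$, $\operatorname{Var}(y_{0t}|H_{t})$ and $\operatorname{Var}(y_{t}|H_{t})$, the entries of $R^{t}(\nu)$ become linear combinations of the centred random variables $y_{0t}-E[y_{0t}|H_{t}]$, $y_{t}-E[y_{t}|H_{t}]$ and $y_{t}^{2}-E[y_{t}^{2}|H_{t}]$, with $H_{t}$-measurable coefficients built smoothly from $\pi_{t}$, $\tilde{p}_{t}$, $\partial \pi_{t}/\partial \nu$ and $\partial \tilde{p}_{t}/\partial \nu$.

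Second, I would verify the moment bound needed for the MDS law of large numbers. Under A1--A3, the parameter $\nu$ is interior and the covariate vectors lie in compact sets $\Gamma_{1}, \Gamma_{2}$, so $\pi_{t}$ and $\tilde{p}_{t}$ are bounded away from $0$ and $1$ and all $H_{t}$-measurable coefficients above are bounded; combined with geometric ergodicity of $W_{t}$ and $M_{t}$ and Hölder's inequality (exactly as applied in the proof of Theorem \ref{score_MDS}), this gives $\sup_{t} E\|R^{t}(\nu)\|^{2} < \infty$ and provides a stationary ergodic envelope for $\{R^{t}(\nu) R^{t}(\nu)^{T}\}$. The law of large numbers for geometrically ergodic processes (\cite{jensen2007law}, \cite{fokianos2009poisson}), applied coordinate-wise to the MDS $\{R^{t}(\nu)\}$, then yields $R_{N}(\nu)/N \xrightarrow{P} 0$. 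Combined with \eqref{conditional_inf}, this gives $\tilde{H}_{N}(\nu)/N \xrightarrow{P} G(\nu)$.

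The main obstacle is the explicit algebra that identifies $R^{t}(\nu)$ as conditionally centred. The zero-inflated score contains the awkward ratio $1/(\pi_{t}+(1-\pi_{t})\tilde{p}_{t}^{k})$ on the event $\{y_{t}=0\}$, and differentiating it spawns several cross terms whose conditional expectations must be shown to collapse exactly onto $\operatorname{Var}(y_{0t}|H_{t}) = (\pi_{t}+(1-\pi_{t})\tilde{p}_{t}^{k})(1-\pi_{t})(1-\tilde{p}_{t}^{k})$ and $\operatorname{Var}(y_{t}|H_{t}) = \lambda_{t}(1-\pi_{t})(1+\lambda_{t}\pi_{t}+\lambda_{t}/k)$, which are precisely the pieces appearing in the summand of $G_{N}(\nu)$. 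Once this matching is carried out term by term, the martingale LLN step completes the proof in a standard manner.
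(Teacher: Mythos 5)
Your proof is correct and follows essentially the same route as the paper: the decomposition $\tilde{H}_{N}(\nu)=G_{N}(\nu)-R_{N}(\nu)$, the conditional Bartlett identity $E[-\partial S^{t}(\nu)/\partial\nu^{T}\mid H_{t}]=\operatorname{Var}[S^{t}(\nu)\mid H_{t}]$ (obtained by differentiating $E[S^{t}(\nu)\mid H_{t}]=0$), and the law of large numbers for geometrically ergodic processes. The only, immaterial, difference is one of ordering: you centre the remainder conditionally and apply the LLN to $R_{N}(\nu)/N$ directly, whereas the paper first passes $\tilde{H}_{N}(\nu)/N$ to its ergodic limit $E\left[\partial^{2}PL_{t}/\partial\nu\,\partial\nu^{T}\right]$ and then uses the same identity to show that this limit cancels $G(\nu)$.
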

\begin{proof}
	From \eqref{conditional_inf}, we already have $\frac{G_{N}(\nu)}{N}\xrightarrow{P}G(\nu)$. From assumption A2 and the law of large numbers for geometrically ergodic process (\cite{jensen2007law} and \cite{fokianos2009poisson}), $\frac{\tilde{H}_{N}(\nu)}{N}\xrightarrow{P}E\left[\frac{\partial^2 PL_{t}(\Theta)}{\partial \nu\partial \nu^{T}}\right]$. The normalized remainder term $\frac{R_{N}(\nu)}{N}=\frac{G_{N}(\nu)}{N}+\frac{\tilde{H}_{N}(\nu)}{N}$ converges in probability to $G(\nu)+E\left[\frac{\partial^2 PL_{t}(\Theta)}{\partial \nu\partial \nu^{T}}\right]$. Thus, we need to prove that $G(\nu)+E\left[\frac{\partial^2 PL_{t}(\Theta)}{\partial \nu\partial \nu^{T}}\right]$ is zero. 
	In Theorem \ref{score_MDS} we showed that $E[S^{t}(\nu)|H_{t}]=0$, which implies $\sum_{t=1}^{N}S^{t}(\nu)P(Y_{t}|H_{t})=0$. Differentiating both sides with respect to $\nu$ we get,
	$E\left[\frac{\partial^2 PL_{t}(\Theta)}{\partial \nu\partial \nu^{T}}|H_{t}\right]= -E\left[S^{t}(\nu)S^{t}(\nu)^{T}|H_{t}\right]$.
	Thus, we can write
	\begin{align}
	G&(\nu)+E\left[\frac{\partial^2 PL_{t}(\Theta)}{\partial \nu\partial \nu^{T}}\right] \nonumber\\&=E(Var[S^{t}(\nu)|H_{t}]) + E\left[E\left[\frac{\partial^2 PL_{t}(\Theta)}{\partial \nu\partial \nu^{T}}|H_{t}\right]\right]\nonumber\\ &=E(Var[S^{t}(\nu)|H_{t}])  -E\left[E[S^{t}(\nu)(S^{t}(\nu))^{T}|H_{t}]\right]\nonumber\\
	&=E\left[E[S^{t}(\nu)(S^{t}(\nu))^{T}|H_{t}]-E[S^{t}(\nu)|H_{t}]E[S^{t}(\nu)|H_{t}]^{T}\right]-E\left[S^{t}(\nu)(S^{t}(\nu))^{T}\right]\nonumber\\
	&=E\left[E[S^{t}(\nu)(S^{t}(\nu))^{T}|H_{t}]\right]-E\left[E[S^{t}(\nu)|H_{t}]E[S^{t}(\nu)|H_{t}]^{T}\right]-E\left[S^{t}(\nu)(S^{t}(\nu))^{T}\right]\nonumber\\
	&=E\left[S^{t}(\nu)(S^{t}(\nu))^{T}\right]-E\left[S^{t}(\nu)(S^{t}(\nu))^{T}\right]\nonumber\\
	&=0.
	\end{align}
	Hence, the normalized remainder term $\frac{R_{N}(\nu)}{N}$ converges in probability to zero, and  the normalized information matrix $\frac{\tilde{H}_{N}(\nu)}{N}$ converges in probability to $G(\nu)$, as $N\rightarrow\infty$.
\end{proof}
\begin{lemma}\label{min_eig}
	Under the assumptions A1-A4, $\lambda_{min}(F_{N}(\nu))\rightarrow\infty$, as $N\rightarrow\infty$, where $\lambda_{min}(F_{N}(\nu))$ is the minimum eigenvalue of the unconditional information matrix $F_{N}(\nu)$.
\end{lemma}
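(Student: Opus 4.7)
The plan is to combine the convergence of the normalized conditional information matrix, $G_N(\nu)/N \xrightarrow{P} G(\nu)$, from \eqref{conditional_inf} with a uniform-integrability argument to pass from the conditional to the unconditional information matrix, and then exploit the positive definiteness of $G(\nu)$ guaranteed by assumption A4.

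First, I would write $F_{N}(\nu)/N = E[G_{N}(\nu)]/N$, so that proving $\lambda_{min}(F_{N}(\nu)) \to \infty$ reduces to showing $F_{N}(\nu)/N \to G(\nu)$ (entry-wise) and then using homogeneity of the minimum-eigenvalue map, namely $\lambda_{min}(F_{N}(\nu)) = N \cdot \lambda_{min}(F_{N}(\nu)/N)$. Since $G(\nu)$ is positive definite by A4, $\lambda_{min}(G(\nu)) =: c > 0$, and by continuity of $\lambda_{min}(\cdot)$ as a function of the matrix entries, $\lambda_{min}(F_{N}(\nu)/N) \to c > 0$. Multiplying by $N$ then yields the claim.

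The substantive step is establishing $F_{N}(\nu)/N \to G(\nu)$. From \eqref{conditional_inf} we already have convergence in probability of $G_{N}(\nu)/N$. To promote this to convergence of expectations I would verify uniform integrability of the summands in $G_{N}(\nu)/N$. Each summand has the form $(S_{1}^{t}-S_{2}^{t})(S_{1}^{t}-S_{2}^{t})^{T}\,\mathrm{Var}(y_{0t}|H_{t}) + \tfrac{\mathrm{Var}(y_{t}|H_{t})}{1-\tilde{p}_{t}}\tfrac{\partial \tilde{p}_t}{\partial \nu}\bigl(\tfrac{\partial \tilde{p}_t}{\partial \nu}\bigr)^{T}$. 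Under A3 the covariate-type vectors lie in compact sets, so $\pi_t$, $1-\pi_t$, $\tilde p_t$ and $1-\tilde p_t$ are bounded away from $0$ and $1$, and the partial derivatives $\partial \pi_t/\partial \nu$ and $\partial \tilde{p}_t/\partial \nu$ are bounded deterministically. The only potentially unbounded piece is $\lambda_t$ (appearing in $\mathrm{Var}(y_t|H_t) = \lambda_t(1-\pi_t)(1+\lambda_t \pi_t + \lambda_t/k)$), but A2 (geometric ergodicity of $W_t$) gives exponential tail control for $\lambda_t = \exp(W_t)$, so arbitrary polynomial moments of $\lambda_t$ are finite and bounded uniformly in $t$. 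This uniform $L^{1+\varepsilon}$ control, combined with the in-probability convergence from \eqref{conditional_inf}, implies $E[G_{N}(\nu)/N] \to G(\nu)$ by Vitali's theorem.

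The main obstacle I anticipate is the uniform integrability step: the entries of $G_N(\nu)/N$ are not uniformly bounded random variables, so one cannot simply invoke bounded convergence. The argument hinges on obtaining moment bounds on $\lambda_t = \exp(W_t)$ (and hence on $\mathrm{Var}(y_t|H_t)$) that are uniform in $t$, which is where geometric ergodicity in A2 is essential -- it upgrades pointwise control of $W_t$ to a stationary tail bound good enough to furnish such moments. Once uniform integrability is in hand, entry-wise convergence $F_N(\nu)/N \to G(\nu)$ follows, continuity of $\lambda_{min}(\cdot)$ gives $\lambda_{min}(F_N(\nu)/N) \to \lambda_{min}(G(\nu)) > 0$, and therefore $\lambda_{min}(F_N(\nu)) = N\,\lambda_{min}(F_N(\nu)/N) \to \infty$, completing the proof.
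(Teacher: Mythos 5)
Your proposal is correct and follows essentially the same route as the paper, which itself gives no details and simply defers to Lemma 3.2 of Fokianos and Kedem (1998): pass from $G_N(\nu)/N \xrightarrow{P} G(\nu)$ to $F_N(\nu)/N = E[G_N(\nu)]/N \to G(\nu)$, invoke positive definiteness of $G(\nu)$ from A4 and continuity of $\lambda_{\min}(\cdot)$, and conclude by the homogeneity $\lambda_{\min}(F_N(\nu)) = N\,\lambda_{\min}(F_N(\nu)/N)$. One small imprecision: geometric ergodicity (A2) does not by itself deliver exponential tail bounds or arbitrary polynomial moments for $\lambda_t = \exp(W_t)$ --- it controls the rate of convergence to the stationary law, not the tails of that law. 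The uniform integrability you need is in fact supplied more directly by A3: since $\tilde{W}_t$ and $\tilde{M}_t$ lie almost surely in nonrandom compact sets, $W_t$ and $M_t$ are bounded, hence $\lambda_t$, $\pi_t$, $\tilde{p}_t$ and their derivatives are uniformly bounded and bounded away from the degenerate values, so the summands of $G_N(\nu)/N$ are uniformly bounded and the interchange of limit and expectation is immediate. With that substitution your argument is complete.
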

	Proof is similar to that of Lemma 3.2 of \cite{fokianos1998}.

\begin{theorem}\label{asymptotic}
Under the assumptions A1-A4, the probability that a locally unique maximum partial likelihood estimator exists converges to one. Also, the maximum partial likelihood estimator $\hat{\nu}$ is consistent and \begin{equation*}
	\sqrt{N}(\hat{\nu}-\nu)~\xrightarrow[]{\text{~~D~~}}~\mathcal{N}_{n_1+p_{1}+q_{1}+n_2+p_{2}+ q_{2}}(0, G^{-1}(\nu))
\end{equation*}
as $N\rightarrow\infty$.
\end{theorem}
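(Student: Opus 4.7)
The proposal is to follow the classical route for asymptotic normality of maximum (partial) likelihood estimators, exploiting the three ingredients already assembled: the martingale CLT for the score (Theorem \ref{score_MDS}), the convergence of the normalized observed information (Lemma \ref{continuity}), and the divergence of the minimum eigenvalue of the unconditional information (Lemma \ref{min_eig}). The skeleton is: (i) establish existence of a locally unique root of the score in a shrinking neighborhood of the truth, (ii) deduce consistency, and (iii) invert a Taylor expansion of the score and apply Slutsky to obtain the limiting normal law.

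For existence and consistency, I plan to mimic the argument of Fokianos and Kedem (1998, Theorem 3.1). Consider the ball $B_N(\delta)=\{\nu' : \|\nu'-\nu\| \le \delta/\sqrt{N}\}$ for fixed $\delta>0$. Expand the score around the true $\nu$ for any $\nu'\in B_N(\delta)$:
\begin{equation*}
S_N(\nu') = S_N(\nu) + \tilde{H}_N(\nu)(\nu'-\nu) + r_N(\nu',\nu),
\end{equation*}
where the remainder $r_N$ is controlled by a uniform bound on third derivatives of $PL$ in a neighborhood of $\nu$ (which holds under A1--A3 and geometric ergodicity). Dividing by $\sqrt{N}$, Theorem \ref{score_MDS} gives $S_N(\nu)/\sqrt{N}=O_P(1)$, Lemma \ref{continuity} gives $\tilde{H}_N(\nu)/N \xrightarrow{P} G(\nu)$ with $G(\nu)$ positive definite by A4, and the remainder is $o_P(1)$ uniformly on $B_N(\delta)$. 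Hence for $\delta$ large enough, $(\nu'-\nu)^T S_N(\nu')<0$ on the boundary of $B_N(\delta)$ with probability tending to one, and a standard application of Brouwer's fixed point theorem (or the Aitchison--Silvey lemma) yields a root $\hat{\nu}\in B_N(\delta)$. Uniqueness on this ball follows from the strict negative definiteness of $\tilde{H}_N/N$ for $N$ large. In particular $\|\hat{\nu}-\nu\|=O_P(N^{-1/2})$, giving consistency.

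For the limit law, expand the score at $\hat{\nu}$:
\begin{equation*}
0 = S_N(\hat{\nu}) = S_N(\nu) + \tilde{H}_N(\nu^{*})(\hat{\nu}-\nu),
\end{equation*}
for some $\nu^{*}$ on the segment joining $\hat{\nu}$ and $\nu$. Rearranging and normalizing,
\begin{equation*}
\sqrt{N}(\hat{\nu}-\nu) = -\left[\frac{\tilde{H}_N(\nu^{*})}{N}\right]^{-1}\frac{S_N(\nu)}{\sqrt{N}}.
\end{equation*}
Because $\hat{\nu}\xrightarrow{P}\nu$, combining Lemma \ref{continuity} with a local equicontinuity argument for $\tilde{H}_N(\cdot)/N$ (which rests on A2--A3 and the ergodic law of large numbers of Jensen and Rahbek) gives $\tilde{H}_N(\nu^{*})/N \xrightarrow{P} G(\nu)$. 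Theorem \ref{score_MDS} delivers $S_N(\nu)/\sqrt{N}\xrightarrow{D}\mathcal{N}(0,G(\nu))$. Slutsky's theorem then yields
\begin{equation*}
\sqrt{N}(\hat{\nu}-\nu)\xrightarrow{D} \mathcal{N}\bigl(0,\, G^{-1}(\nu)\,G(\nu)\,G^{-1}(\nu)\bigr)=\mathcal{N}\bigl(0,\,G^{-1}(\nu)\bigr),
\end{equation*}
which is the claim.

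The main obstacle will be handling the Hessian at the intermediate point $\nu^{*}$, i.e.\ promoting the pointwise convergence $\tilde{H}_N(\nu)/N \xrightarrow{P} G(\nu)$ of Lemma \ref{continuity} to convergence at the data-dependent point $\nu^{*}$. This requires (a) bounding third derivatives of $PL_t$ uniformly on a compact neighborhood of $\nu$ using A3 and the boundedness of the covariate sets $\Gamma_1,\Gamma_2$, and (b) invoking the ergodic law of large numbers of Jensen and Rahbek to obtain a uniform (not merely pointwise) law of large numbers for second derivatives. A secondary technical point is verifying that the fixed-point/boundary argument for existence goes through despite the nonconvexity of $PL$ in the zero-inflated model; here the positive definiteness of $G(\nu)$ from A4 combined with the $o_P(1)$ remainder is what drives the argument, so the proof is robust.
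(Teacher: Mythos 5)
Your proposal is correct and follows essentially the same route as the paper's Appendix A: both rest on the score martingale CLT (Theorem \ref{score_MDS}), the convergence of the normalized Hessian (Lemma \ref{continuity}), and the eigenvalue divergence (Lemma \ref{min_eig}), establish existence and consistency via a local argument on the boundary of a shrinking $O_P(N^{-1/2})$ neighborhood, and finish with a mean-value expansion of the score plus Slutsky. The only cosmetic difference is that the paper expands $PL$ itself to second order and shows it is negative on the boundary of an $F_N^{1/2}$-normalized ball, whereas you expand the score and invoke Brouwer/Aitchison--Silvey; the technical point you flag about controlling $\tilde{H}_N(\nu^{*})/N$ at the data-dependent intermediate point is the same one the paper handles (somewhat informally) by noting that $O_N(\delta)$ shrinks to $\nu$.
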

The outline of the proof is similar to the one in Fokianos and Kedem \citeyear{fokianos1998,fokianos2003}. The details are given in Appendix A.

\subsection{Model Inference and Goodness-of-Fit Statistics}

For inferring about the model parameters,  we make the assumption that the central limit theorem holds for $\widehat{\Theta}$ (for CLT for $\hat{\nu}$ see Theorem 3) 
so that, $\sqrt{n}( \widehat{\Theta}-\Theta) \xrightarrow[\text{}]{\text{d}} N(0,\hat{\Omega})$, as $n\rightarrow \infty$, where $\hat{\Omega}$ is the approximate covariance matrix, i.e., $\hat{\Omega}=-\left(\frac{\partial^2 PL(\hat{\Theta})}{\partial \Theta \partial\Theta'}\right)^{-1}$ or $\hat{\Omega}=-\left(\frac{\partial^2 PL^{c}(\hat{\Theta})}{\partial \Theta \partial\Theta'}\right)^{-1}$. 

To test the hypotheses, $H_0:C\Theta=\zeta,\, \text{ versus } H_1:C\Theta\neq\zeta,$
where the matrix $C$ is of order $m\times p$ and of rank $m(\leq p)$,  an  approximate Wald type statistic,  \begin{equation*}
W=(C\widehat{\Theta}-\zeta)^T[C\hat{\Omega}C^T]^{-1}(C\widehat{\Theta}-\zeta)\sim \chi^{2}_{m},\text { for large } N \text{ under }H_0,
 \end{equation*}
is used.
Consider two zero inflated NB-ARMA models, such that the  number of fitted parameters are $(m_{p_1},m_{p_2})$ and their respective deviances are, $(D_{m_{p_1}},D_{m_{p_2}})$. The likelihood ratio test statistic $L=D_{m_{p_1}}-D_{m_{p_2}}$ for testing between the two models may be assumed to follow a $\chi_{m_{p_1}-m_{p_2}}^{2}$ under $H_0$. 
Other than the deviance, the mean squared error (MSE), the Pearson chi-square, the two information criteria (AIC, BIC), and mean absolute deviation (MAD) may also be used to evaluate and select competing models. For definitions of these various selection criteria, refer to Chapter 1 of \cite{Kedem2002}.
\section{Simulation Results}\label{sim}
In this section  the estimators  and their asymptotic properties  are evaluated through  simulations for various zero inflated NB ARMA models.

The models considered are:
\begin{itemize}
	\item Model 1: Only MA terms are considered in the log and logit mean models:
	\begin{align}
	W_{t} = x_{t}^{T}\beta+\theta_{1}e_{t-1}+\theta_{2}e_{t-2}\qquad\text{and}\qquad M_{t}=u_{t}^{T}\delta+\gamma_{1}e_{t-1} 
	\end{align} 
	\item Model 2: Both AR and MA components are considered in the log and logit mean models:
	\begin{align}
	W_{t} =&     x_{t}^{T}\beta+\phi_{1}(Z_{t-1}+e_{t-1})+\theta_{1}e_{t-1}+\theta_{2}e_{t-2}~~~\text{and}\nonumber\\ M_{t}=& u_{t}^{T}\delta+\alpha_{1}(Z_{t-1}+e_{t-1})+\gamma_{1}e_{t-1} 
	\end{align}
	where $x_{t}^{T}=[\begin{matrix}1&\cos(2\pi t/6)&\sin(2\pi t/6)\end{matrix}]$ and $u_{t}^{T}=[\begin{matrix}1&\cos(2\pi t/6)&\sin(2\pi t/6)\end{matrix}]$ in both models 1 and 2.   
	\item Model 3: Only MA terms are considered in the log mean model:
	\begin{align}
	W_{t} = x_{t}^{T}\beta+\theta_{1}e_{t-1}+\theta_{3}e_{t-3}\qquad\text{and}\qquad M_{t}=u_{t}^{T}\delta 
	\end{align} 
	where $x_{t}^{T}=[\begin{matrix}1~t'/(N-1)~\cos(2\pi t'/52)~\sin(2\pi t'/52)\end{matrix}]$ and \\$u_{t}^{T}=[\begin{matrix}1~\cos(2\pi t'/52)~\sin(2\pi t'/52)\end{matrix}]$.  
\end{itemize}

Simulations with $1000$ replications and sample sizes $N=30,\, 100,\,500$ were run, and the results corresponding to the model 3 are displayed in Table \ref{SimCompTableMAreal}. Tables for models 1 and 2 are put in the Supplementary materials. Goodness of fit plots for all models and $n=100$ are presented in Figure \ref{GF_sim}. From this figure, we note that fitted counts from both EM and MLE are very close to the observed counts. 

For EM estimates (all models), we see that the average bias of the model parameters  range between $(2\%-24\%)$ when $N=30$. As we increase sample size to $N=500$, the average bias  reduces to $(0.07\%-3\%)$. The standard errors range from $(0.2\%-1\%)$ for all values of $N$. The results for MLE are similar. We also note from the simulation results that the bias of the overdispersion parameter $k$ is always positive and are also quite high as compared to the model parameters.

The overestimation of $k$ has been observed before in the statistical literature, see \cite{piegorsch} and has been attributed to  small values of mean parameters and sample sizes.  For improving the bias, it is suggested to estimate the reciprocal of $k$ instead of $k$ itself, this is done to avoid the estimation of a discontinuous variable and also the confidence intervals of $1/k$ are more continuous and symmetric. Also, method of moments estimation and maximum quasi likelihood estimation methods have shown to give better estimates for $k$ (\cite{clark}).



In order to check the asymptotic normality of the MLE estimators, normal probability plots (QQ plots) for Model 3 parameters are provided (see Figure \ref{empqqplot}).  We see that the estimated and asymptotic densities of the model parameters $\nu$ are in agreement with each other.  

\section{Real Data Analysis}
Dengue is a life-threatening mosquito-borne viral infection  transmitted to humans through the bites of infected Aedes aegypti mosquitoes. The dengue virus comprises of four distinct but closely related serotypes (DEN 1-4). It is one of the most severe health problems being faced globally, with WHO reporting that there are 390 million dengue infections per year worldwide. In 2015, India alone reported 99913 dengue cases including 200 deaths.  
The dengue data under investigation consist of weekly dengue counts  in a three year period, January $2013$ to November $2015$, collected from  the  various divisions of an Indian metro city  by the public health department. 
However, the  distribution of disease counts across  divisions are quite uneven with some of them reporting excessive zero  weekly counts (a maximum of 50$\%$) in the span of three years as compared to the rest. We suspect that these zero disease counts are due to underreporting of dengue cases from the divisions. \cite{shepard2014} reports  a  prevalence of severe underreporting of dengue cases in India. Note the dengue data set is new and unpublished. 

\subsection{An Initial Look at the Dengue Data}
Taking a preliminary glance at the dengue data we note that the data set consists of weekly counts spanning over three years. 
The main part of the city is divided into several administrative divisions, which are numbered alphabetically from A-F. 
Few of these divisions (A-D) report a high quantity of zero dengue counts, while the rest report mostly  non-zero counts consistently throughout the year. We suspect that the zero dengue counts in divisions A-D arise due to their large distance from the national sentinel dengue surveillance hospitals, which leads to an under/non-reporting of active dengue cases. In this article, we analyze the dengue counts from division A, the histogram in  Figure \ref{data_hist} shows division A has a high percentage of zeros (66 in total, i.e.,  $44.29\%$).
 
The distribution of weekly  dengue counts (test results based on the MAC ELISA method) from division A  in the period January 2013 - November 2015 (149 weeks)  (see Figure \ref{data_hist}) shows an increasing trend and yearly seasonality with high dengue counts reported during monsoon months. This non-stationary behaviour of the dengue time series is also apparent from the ACF and PACF (see Figure \ref{acfdata}).

\subsection{Analyzing the Dengue Data}\label{real}
The proposed zero inflated  NB-ARMA model for various values of the pairs $(p_1,q_1)$ and $(p_2,q_2)$ is fitted to the dengue data. The NB mean $\lambda_t$ is modeled using a log model, while a logit model is fitted to  $\pi_t$. Due to the nonstationary behaviour detected in the  dengue count data (see Figure \ref{acfdata}), we tested for presence of trend and seasonality. A linear trend and yearly seasonality (represented by a pair of sine and cosine terms) were found to be statistically significant. The weather covariates introduced in the mean models were, average temperature ($T_{avg}$), relative humidity (HMD) and rainfall (RF). However, from preliminary analysis (not reported here) it was seen that $\pi_t$ was not affected significantly by any of the weather covariates or any order of AR or MA terms. Based on these observations, we considered the  following model forms for $W_t$ and $M_t$ (note that in each model $M_t$ has a simple form with only trend and seasonality terms):

\textbf{Model 1:}
\begin{equation*}\label{M1}
W_{t}=x^{T}_{t}{\beta}+\sum_{j=1}^{5}{\theta}_{j}e_{t-j},\qquad M_{t}=u^{T}_{t}{\delta},
\end{equation*}
where, $x_{t}^{T}=[\begin{matrix}1~t^{'}/(N-1)~{\cos}(2\pi t^{'}/52),{\sin}(2\pi t^{'}/52)~RF~HMD~T_{avg}\end{matrix}]$ and \\$u_{t}^{T}=[\begin{matrix}1~t^{'}/(N-1)~{\cos}(2\pi t^{'}/52)~{\sin}(2\pi t^{'}/52)\end{matrix}]$, here $t' := (t-1)$ to make the range of trend regressor ($t'/(N-1)$) lie in the unit interval [0,1].

\textbf{Model 2:}
\begin{equation*}\label{M2}
W_{t}=x^{T}_{t}{\beta}+\sum_{j=1}^{4}{\theta}_{j}e_{t-j},\qquad M_{t}=u^{T}_{t}{\delta},
\end{equation*}

\textbf{Model 3:}
\begin{equation*}\label{M3}
W_{t}=x^{T}_{t}{\beta}+\sum_{i=1}^{5}{\phi}_{i}Z_{t-i}+\sum_{j=1}^{5}{\theta}_{j}e_{t-i},\qquad M_{t}=u^{T}_{t}{\delta},
\end{equation*}

\textbf{Model 4:}
\begin{equation*}\label{M4}
W_{t}=x^{T}_{t}{\beta}+\sum_{j=1}^{3}{\theta}_{j}e_{t-j},\qquad M_{t}=u^{T}_{t}{\delta},
\end{equation*}
for models 2-4, $x_{t}^{T}=[\begin{matrix}1~t^{'}/(N-1)~{\cos}(2\pi t'/52)~{\sin}(2\pi t'/52)~HMD~T_{avg}\end{matrix}$] and $u_{t}^{T}=[\begin{matrix}1~t^{'}/(N-1)~{\cos}(2\pi t'/52)~{\sin}(2\pi t'/52)\end{matrix}$].
%

\textbf{Model 5:}
\begin{equation*}\label{M7}
W_{t}=x^{T}_{t}{\beta}+{\theta}_{1}e_{t-1}+{\theta}_{3}e_{t-3},\qquad M_{t}=u^{T}_{t}{\delta},
\end{equation*}

\textbf{Model 6:}
\begin{equation*}\label{M8}
W_{t}=x^{T}_{t}{\beta},\qquad M_{t}=u^{T}_{t}{\delta},
\end{equation*}
where $x_{t}^{T}=[\begin{matrix}1~t^{'}/(N-1)~{\cos}(2\pi t'/52)~{\sin}(2\pi t'/52)~HMD \end{matrix}]$, \\$u_{t}^{T}=[\begin{matrix}1~{\cos}(2\pi t'/52)~{\sin}(2\pi t'/52) \end{matrix}]$ and for all models (1-6) \\$e_{t}= (Y_{t}-\lambda_{t}(1-\pi_{t}))/\sqrt{ \Big[ \lambda_{t}(1-\pi_{t})(1+\lambda_{t}\pi_{t}+\frac{\lambda_{t}}{k})\Big]}$.

We choose M5 as the best model based on  values of AIC and BIC (see Table \ref{modelselectiontable}). The deviance and chi-squared values for M5 are also small when compared to models M1-M4 and M6. The  parameter estimates based on EM algorithm, standard errors and p-values for the chosen model M5 are given in Table \ref{tab:table5}. No remaining pattern is detected in the ACF and PACF plots of the  residuals from M5 (see Figure \ref{acfresiduals}), this is also supported by a high p-value ($>0.45$) of the Box-Ljung test (upto lag 10). The actual observed counts versus the fitted (conditional) mean $\Lambda_{t}$ plot from M5 is shown in Figure \ref{counts_fitted}. The randomized quantile residual plot  corresponding to M5  in Figure \ref{residual_qqplot},  show no evidence of any systematic pattern among the randomized quantile residuals and also QQ plot of the quantile residuals satisfy the normality assumptions. 

Further, the average probability of an excess zero is computed as \citep{Lambert1992},
\begin{align}
	\hat{p}_0 = \frac{\text{Number of}~(y_{t}=0) \text{ observed } -\sum_{y_{t}=0}^{}\Big(\frac{\hat{k}}{\hat{k}+\hat{\lambda}_{t}}\Big)^{\hat{k}}}{N}
\end{align}
From the observed dengue counts, the fraction of zeros was found to be 0.4429 (66 zeros out of 149 counts) and from model M5 (NB part), the fraction of zeros is estimated to be is 0.2833. Thus, the estimated average probability of an excess zero is $\hat{p}_0 = 0.1597$. 
It shows that the zero-inflated part of model M5 estimates these 15.97\% of excess zeros along with 28.33\% of zeros from the fitted NB model. 

Significant p-values (at level of significance $10\%$) obtained by running one-sided Vuong's test (\cite{vuong}) comparing M5 with various NB models (without zero-inflation), also confirms the need for fitting a zero inflated model to the dengue data. 
	
\subsection{A Comparative Study}
Statistical modeling of dengue data is not a new topic in the biomedical literature.  Some popular techniques  for analyzing dengue incidence include, generalized linear models such as Poisson and NB multivariate regression models \citep{xu2014,minh2014,lu2009}. Modelling strategies based on Gaussian assumptions to model count time data such as, ARIMA and SARIMA models with climate covariates \citep{luz2008,gharbi2011,wongkoon2012,martinez2011} are also popularly used to analyse dengue prevalence.  However, due to the non-negative and integer valued nature of the data, these ARIMA/SARIMA models may give undesirable results and general exponential family formulation may be required (\cite{cox}). Temporal models, like autoregressive (AR)  Poisson or NB distributions \citep{lu2009,briet2013} have also been fitted to dengue data.  However, most  of these  models  have a simple form with at most a first order AR term on the past responses and current values of the covariates. These simple model forms are shown  to be inadequate in capturing the complex disease trajectory over time in the following comparative study.

We compared the performance of M5 with some popular statistical dengue models available in the literature. The alternative models compared are zero inflated NB models with only AR terms, NB models with no zero inflation, ARIMA and SARIMA models. 
\begin{itemize}
\item Alternative Model (AM-1): Zero inflated Negative Binomial with AR(1) terms
\begin{align}
Y_t|s_t,H_t\sim \text{NB }(k,(1-s_t)\lambda_t) \qquad s_t|H_t\sim \text{Bernoulli }(\pi_t).\end{align} The means $\lambda_t$ and $\omega_t$ are modeled using the link functions,
\begin{align}
\log(\lambda_t)=x^{T}_{t}{\beta}+\theta I_{\{y_{t-1}>0\}},\qquad \text{logit }(\pi_t)=u^{T}_{t}{\delta},
\end{align}
where $H_t=(y_{t-1},x_t,u_t)$, \\$x_{t}^{T}=[\begin{matrix}1,t'/(N-1),\text{cos}(2\pi t'/52),\text{sin}(2\pi t'/52),HMD\end{matrix}$] and \\$u_{t}^{T}=[\begin{matrix}1,\text{cos}(2\pi t'/52),\text{sin}(2\pi t'/52) \end{matrix}$].

\item Alternative Model (AM-2): Negative Binomial $(k,\lambda_t/(\lambda_t+k))$ with AR(1) terms
\begin{align}\label{A36}
log(\lambda_t)=x_{t}^{T}\beta+ \theta\{\log(y^{*}_{t-j})-x_{t-j}^{T}\beta\},
\end{align}
where $y^{*}_{t-j}=\max(y_{t-j},c),\,0<c<1$ and \\$x_{t}^{T}=[\begin{matrix}1,t'/(N-1),\text{cos}(2\pi t'/52),\text{sin}(2\pi t'/52),HMD \end{matrix}$].

\item Alternative Model (AM-3): ARIMA $(p,r,q)$ model was fitted to the dengue counts. The best model with least AIC and BIC values was  the ARIMA (2,0,2).

\item Alternative Model (AM-4): The best SARIMA model with respect to AIC and BIC  values was  SARIMA $ (2,0,0)(1,0,1)^{52} $.
\end{itemize}

%
%
	

Table \ref{tab:denguedatacomparetable} shows that M5 has the lowest values of all four reported goodness of fit statistics. By setting different threshold values (i.e., if $\hat{Y}_t<\text{ threshold value}$, then assume $\hat{Y}_t=0$), we  conducted a sensitivity analysis (see results in Table \ref{ss} ) of zero counts prediction by model M5 and the zero inflated model AM-1, the table shows that the accuracy of M5 identifying the zeros is larger than AM-1.

%
%
%
%
Other than the above discussed alternative models, we also used  a neural network approach for dengue data fitting.  Neural networks with one/two hidden layers and ($1,3,7,10$)  hidden nodes with both sigmoid and tanh activation functions were tried. The inputs
 were the response variables till lag 4, humidity,  rainfall and average temperature.  The network with one hidden layer and node and the tanh activation function gave the lowest values of MSE $=16.47$ and MAD $= 4.05$. 
 
 \section{Analyzing the Syphilis Data}\label{real2}
 
 In this section, we fit our zero inflated NB-ARMA model to the time series of weekly syphilis disease counts in Virginia,
 USA between 2007 and 2010. The percentage of zeros in the syphilis data is $26.79\%$. 
 Our proposed model for various values of the pairs $(p_1,q_1)$ and $(p_2,q_2)$ is fitted to the syphilis data. Due to the nonstationary behaviour detected in the  syphilis count data we tested and found the presence of a significant linear trend. Based on a comparison of MSE, AIC and BIC values we chose the following model to fit the syphilis data:
  
%
 
 \begin{equation*}\label{M23}
 W_{t}=x^{T}_{t}{\beta}+\sum_{j=1}^{2}{\theta}_{j}e_{t-j},\qquad M_{t}=u^{T}_{t}{\delta},
 \end{equation*}
 
%
%
 where, $x_{t}^{T}=[\begin{matrix}1&t/1000 \end{matrix}] $ and $u_{t}^{T}=[\begin{matrix}1&t/1000 \end{matrix}]$ for all models (1-6) $e_{t}= (Y_{t}-\lambda_{t}(1-\pi_{t}))/\sqrt{ \Big[ \lambda_{t}(1-\pi_{t})(1+\lambda_{t}\pi_{t}+\frac{\lambda_{t}}{k})\Big]}$.
 
 Note we chose the linear trend regression to be of the form $x_{t}^{T}=[\begin{matrix}1&t/1000 \end{matrix}] $, since this form has been used before in the literature (see \cite{yang2013} and \cite{ghahramani2020}). The  EM based results for the chosen model are shown in Table \ref{tab:estimates}. 

We compared our model's performance with the distribution-free functional response (FRM) model  of \cite{ghahramani2020}, a zero inflated Poisson (ZIP), and a zero inflated NB (ZINB). The comparison is shown in Table \ref{Compare_ghahramani}. Note that the MSE, AIC and BIC values for our zero-inflated NB-ARMA model are lesser than the other models frequently used in the literature. 
 
 \section{Concluding Remarks}
 
 In this article, we proposed a zero inflated NB-ARMA model to fit correlated count time data with excessive zeros and variation. The proposed model was successfully fitted to weekly dengue and syphilis counts with zero inflation. In a detailed comparison of the chosen dengue and syphilis models with  various alternative models available in the literature we saw that the selected model had the lowest values of RMSE and MAD values. All computations  were done using the R and Matlab statistical softwares. The programs are available on request from the first author. 
 
 In future, we are interested in extending the proposed model to a multivariate time series setup using copulas. As an example, we may consider setting up a model for dengue counts over time simultaneously for several regions.

\section*{Appendix A: Proof of Theorem 2}\label{appA}  
  	We first prove asymptotic existence and consistency of maximum partial likelihood estimator $\hat{\nu}$. By Taylor expansion at true parameters $\nu$,
  	\begin{align}
  	PL(\tilde{\nu}) = PL(\nu) + (\tilde{\nu}-\nu)^{T}S_{N}(\nu)-\frac{1}{2}(\tilde{\nu}-\nu)^{T}\tilde{H}_{N}(\tilde{\tilde{\nu}})(\tilde{\nu}-\nu),
  	\end{align}
  	where $\tilde{\tilde{\nu}}$ lies between $\tilde{\nu}$ and $\nu$. Then,
  	\begin{align}\label{taylor}
  	PL(\tilde{\nu}) - PL(\nu) = (\tilde{\nu}-\nu)^{T}S_{N}(\nu)-\frac{1}{2}(\tilde{\nu}-\nu)^{T}\tilde{H}_{N}(\tilde{\tilde{\nu}})(\tilde{\nu}-\nu).
  	\end{align}
  	Let $O_{N}(\delta):=\{\tilde{\nu}:||F_{N}(\nu)^{T/2}(\tilde{\nu}-\nu)||\leq\delta\}$ and $\tilde{\lambda} := F_{N}(\nu)^{T/2}(\tilde{\nu}-\nu)/\delta$ where $F_{N}(\nu)^{T/2}:=(F_{N}(\nu)^{1/2})^{T}$. Then, $(\tilde{\nu}-\nu)^{T} = \tilde{\lambda}^{T}F_{N}(\nu)^{-1/2}\delta$, \eqref{taylor} becomes,
  	\begin{align}\label{}
  	PL(\tilde{\nu}) - PL(\nu) = \delta\tilde{\lambda}^{T}F_{N}(\nu)^{-1/2}S_{N}(\nu)-\frac{\delta^{2}}{2}\tilde{\lambda}^{T}F_{N}(\nu)^{-1/2}\tilde{H}_{N}(\tilde{\tilde{\nu}})F_{N}(\nu)^{-T/2}\tilde{\lambda}.
  	\end{align}
  	We are going to prove that for every $\eta>0$ there exist $N$ and $\delta$ such that
  	\begin{align}\label{prob}
  	P\left[(PL(\tilde{\nu}) - PL(\nu))<0~\forall\tilde{\nu}\in\partial O_{N}(\delta)\right]\geq 1-\eta,
  	\end{align} 
  	where $\partial O_{N}(\delta):=\{\tilde{\nu}:||F_{N}(\nu)^{T/2}(\tilde{\nu}-\nu)||=\delta\}$ is a boundary set of $O_{N}(\delta)$. This show that with probability tending to one, there exists a local maximum inside $O_{N}(\delta)$. Then the left part of inequality \eqref{prob} becomes,
  	\begin{align}\label{prob2}
  	P&\left[(PL(\tilde{\nu}) - PL(\nu))<0\right]\nonumber\\&=P\left[\delta\tilde{\lambda}^{T}F_{N}(\nu)^{-1/2}S_{N}(\nu)-\frac{\delta^{2}}{2}\tilde{\lambda}^{T}F_{N}(\nu)^{-1/2}\tilde{H}_{N}(\tilde{\tilde{\nu}})F_{N}(\nu)^{-T/2}\tilde{\lambda}<0\right]\nonumber\\
  	&=P\left[\delta\tilde{\lambda}^{T}F_{N}(\nu)^{-1/2}S_{N}(\nu)-\frac{\delta^{2}}{2}\tilde{\lambda}^{T}F_{N}(\nu)^{-1/2}\tilde{H}_{N}(\tilde{\tilde{\nu}})F_{N}(\nu)^{-T/2}\tilde{\lambda}<0\right]
  	\end{align} 
  	Note that $\tilde{\lambda}^{T}\tilde{\lambda}=1$. Consider
  	\begin{align}\label{ineq}
  	\delta\tilde{\lambda}^{T}F_{N}(\nu)^{-1/2}&S_{N}(\nu)-\frac{\delta^{2}}{2}\tilde{\lambda}^{T}F_{N}(\nu)^{-1/2}\tilde{H}_{N}(\tilde{\tilde{\nu}})F_{N}(\nu)^{-T/2}\tilde{\lambda}\nonumber\\&\leq\delta||\tilde{\lambda}F_{N}(\nu)^{-1/2}S_{N}(\nu)||-\frac{\delta^{2}}{2}\lambda_{min}(F_{N}(\nu)^{-1/2}\tilde{H}_{N}(\tilde{\tilde{\nu}})F_{N}(\nu)^{-T/2})\nonumber\\&\leq\delta||\tilde{\lambda}||||F_{N}(\nu)^{-1/2}S_{N}(\nu)||-\frac{\delta^{2}}{2}\lambda_{min}(F_{N}(\nu)^{-1/2}\tilde{H}_{N}(\tilde{\tilde{\nu}})F_{N}(\nu)^{-T/2})\nonumber\\
  	&=\delta||F_{N}(\nu)^{-1/2}S_{N}(\nu)||-\frac{\delta^{2}}{2}\lambda_{min}(F_{N}(\nu)^{-1/2}\tilde{H}_{N}(\tilde{\tilde{\nu}})F_{N}(\nu)^{-T/2})
  	\end{align}
  	Substitute \eqref{ineq} in \eqref{prob2}, then \eqref{prob2} becomes
  	\begin{align}\label{ineq2}
  	P&\left[(PL(\tilde{\nu}) - PL(\nu))<0\right]\nonumber\\&=P\left[||F_{N}(\nu)^{-1/2}S_{N}(\nu)||^{2}\leq\frac{\delta^{2}}{4}\lambda_{min}^{2}\big(F_{N}(\nu)^{-1/2}\tilde{H}_{N}(\tilde{\tilde{\nu}})F_{N}(\nu)^{-T/2}\big)\right]
  	\end{align} 
  	From Lemma \ref{continuity}, we know that $\frac{\tilde{H}_{N}(\nu)}{N}\xrightarrow{P}G(\nu)$ as $N\rightarrow\infty$. Then, the normalized information matrix $F_{N}(\nu)^{-1/2}\tilde{H}_{N}(\nu)F_{N}(\nu)^{-T/2}$ converges in probability to identity matrix $I$. Note that the set $\{O_{N}(\delta)\}$ shrinks to $\nu$ as $N\rightarrow\infty$. Thus, the normalized information matrix $F_{N}(\nu)^{-1/2}\tilde{H}_{N}(\tilde{\tilde{\nu}})F_{N}(\nu)^{-T/2}$ converges in probability to identity matrix $I_{}$ since $\tilde{\tilde{\nu}}$ goes to $\nu$ as $N\rightarrow\infty$. From Cramer-Wold Device, for any $\lambda\neq0$, $\lambda^{T}F_{N}(\nu)^{-1/2}\tilde{H}_{N}(\tilde{\tilde{\nu}})F_{N}(\nu)^{-T/2}\lambda$ converges in probability to 1. Then there exist $\lambda$ such that $\lambda_{min}(F_{N}(\nu)^{-1/2}\tilde{H}_{N}(\tilde{\tilde{\nu}})F_{N}(\nu)^{-T/2})$ converges in probability to 1. From Lemma 3.4 of \cite{fokianos1998}, the expression \eqref{ineq2} becomes
  	\begin{align}\label{ineq3}
  	P&\left[(PL(\tilde{\nu}) - PL(\nu))<0\right]\nonumber\\&=P\left[||F_{N}(\nu)^{-1/2}S_{N}(\nu)||^{2}\leq\frac{\delta^{2}}{4}\lambda_{min}^{2}\big(F_{N}(\nu)^{-1/2}\tilde{H}_{N}(\tilde{\tilde{\nu}})F_{N}(\nu)^{-T/2}\big)\right]\nonumber\\
  	&\geq P\left[||F_{N}(\nu)^{-1/2}S_{N}(\nu)||^{2}\leq\frac{\delta^{2}}{4}\right]\nonumber\\
  	&\geq 1-\frac{4}{\delta^{2}}E\left[||F_{N}(\nu)^{-1/2}S_{N}(\nu)||^{2}\right].
  	\end{align} 
  	For a given $\eta>0$, choosing $\delta = \sqrt{\frac{4}{\eta}E\left[||F_{N}(\nu)^{-1/2}S_{N}(\nu)||^{2}\right]}$, then we have
  	\begin{align}\label{ineq4}
  	P\left[(PL(\tilde{\nu}) - PL(\nu))<0 \forall\tilde{\nu}\in\partial O_{N}(\delta)\right]\geq 1-\eta.
  	\end{align}
  	Therefore asymptotic existence is established. More specifically, there exists a sequence of maximum partial likelihood estimators $\hat{\nu}$, such that for any $\eta>0$,
  	there are $\delta$ and $N_{1}$ such that 
  	\begin{align}\label{ineq5}
  	P\left[\hat{\nu}\in O_{N}(\delta)\right]&\geq 1-\eta\nonumber\\
  	P\left[||F_{N}(\nu)^{-T/2}(\hat{\nu}-\nu)||\leq\delta\right]&\geq 1-\eta\nonumber\\
  	P\left[||\hat{\nu}-\nu||\leq\frac{\delta}{\lambda_{min}(F_{N}(\nu))}\right]&\geq 1-\eta
  	\end{align}
  	Thus, the maximum partial likelihood estimator $\hat{\nu}$ is locally unique. Therefore consistency is established. 
  	
  	Next, we prove asymptotic normality. By Taylor expansion around $\hat{\nu}$, and using the mean value theorem for multivariate function we obtain for $0\leq s\leq 1$,
  	\begin{align}\label{asymt}
  	S_{N}(\nu) =& S_{N}(\hat{\nu}) + \tilde{H}_{N}(\nu+s(\hat{\nu}-\nu))(\hat{\nu}-\nu)\nonumber\\
  	=& \tilde{H}_{N}(\nu+s(\hat{\nu}-\nu))(\hat{\nu}-\nu),
  	\end{align}
  	where $\nu+s(\hat{\nu}-\nu)$ lies between $\hat{\nu}$ and $\nu$. Multiply $F_{N}(\nu)^{-1/2}$ both sides to \eqref{asymt}, then
  	\begin{align}\label{asymt2}
  	F_{N}(\nu)^{-1/2}S_{N}&(\nu) = F_{N}(\nu)^{-1/2}\tilde{H}_{N}(\nu+s(\hat{\nu}-\nu))(\hat{\nu}-\nu)\nonumber\\
  	&= \big(F_{N}(\nu)^{-1/2}\tilde{H}_{N}(\nu+s(\hat{\nu}-\nu))F_{N}(\nu)^{-T/2}\big)\big(F_{N}(\nu)^{T/2}(\hat{\nu}-\nu)\big).
  	\end{align}
  	From Theorem \ref{score_MDS}, The score function $S_{N}(\nu)$ converges in distribution to $\mathcal{N}(0, G(\nu))$, as $N\rightarrow\infty$. The normalized score function $F_{N}^{-1/2}S_{N}(\nu)$ converges in distribution to $\mathcal{N}(0, I)$. From consistency \eqref{ineq5} and Lemma \ref{continuity}, $F_{N}(\nu)^{-T/2}\tilde{H}_{N}(\nu+s(\hat{\nu}-\nu))F_{N}(\nu)^{-T/2}$ converges in probability to identity matrix $I_{}$. Then, the expression \eqref{asymt2} becomes,
  	\begin{align}\label{asymt3}
  	F_{N}(\nu)^{T/2}(\hat{\nu}-\nu)\rightarrow \mathcal{N}(0,I),
  	\end{align}
  	in distribution as $N\rightarrow\infty$. The normalized conditional information matrix \\$F_{N}(\nu)^{-1/2}G_{N}(\nu)F_{N}(\nu)^{-1/2}$ converges in probability to identity matrix $I_{}$. Choosing $G_{N}(\nu)^{1/2}$ such that $F_{N}(\nu)^{-1/2}G_{N}(\nu)^{1/2}$ is the Cholesky square root of \\$F_{N}(\nu)^{-1/2}G_{N}(\nu)F_{N}(\nu)^{-1/2}$. We have from the continuity of the square root that $F_{N}(\nu)^{-1/2}G_{N}(\nu)^{1/2}$ converges in probability to identity matrix $I_{}$. Thus,
  	\begin{align}\label{asymt4}
  	G_{N}(\nu)^{T/2}(\hat{\nu}-\nu)= G_{N}(\nu)^{T/2}F_{N}(\nu)^{-T/2}F_{N}(\nu)^{T/2}(\hat{\nu}-\nu) \rightarrow \mathcal{N}(0,I)
  	\end{align}
  	as $N\rightarrow\infty$. From the continuity of the square root $\frac{G_{N}(\nu)^{T/2}}{\sqrt{N}}$ converges in probability $G(\nu)^{T/2}$. Then \eqref{asymt4} becomes,
  	\begin{align}\label{}
  	\sqrt{N}\Big(\frac{G_{N}(\nu)^{T/2}}{\sqrt{N}}\Big)(\hat{\nu}-\nu) \rightarrow \mathcal{N}(0,I),
  	\end{align}
  	\begin{align}\label{asymt5}
  	\sqrt{N}(\hat{\nu}-\nu) \rightarrow \mathcal{N}(0,G(\nu)^{-1}),
  	\end{align}
  	in distribution as $N\rightarrow\infty$.
  
  \section*{Acknowledgements}
The work of S. Mukhopadhyay was supported by the Science and Research Engineering Board (Department of Science
and Technology, Government of India) [File Number: EMR/2016/005142] and Wadhwani Research Centre for Bio-Engineering. We would like to acknowledge Professors Konstantinos Fokianos and Monika Bhattacharjee for their helpful suggestions. 
 \bibliographystyle{chicago}
 \bibliography{References}
 \newpage

\begin{table}[h!]
	\tiny
	\begin{center}
		\caption{Simulation results for Model 3 in Section \ref{sim}}
		\label{SimCompTableMAreal}
		\begin{tabular}{|c|c|c|c|c|c|} 
			\hline
			\multicolumn{6}{|c|}{EM} \\
			\hline
			\multicolumn{6}{|c|}{N=30}\\ 
			\hline
			
			Parameters&True& Est.& S.E.&$|\text{Bias}|$ & C.I.  \\\hline
			Intercept&0.3  &  0.3461 &   0.0212  & 0.0461 &(0.3045, 0.3877) \\
			$t/30$& 0.0001 & 0.00045  &  0.0015  &0.00035 &(-0.0025, 0.0034) \\
			cos(2$\pi t$/52)&0.2  &  0.2103  &  0.0181  &0.0103 & (0.1748, 0.2458)  \\
			sin(2$\pi t$/52)&-0.4000  & -0.4253  &  0.0192  &0.0253& (-0.4629, -0.3877)  \\
			MA1&-3 &  -3.4125  &  0.0531  &0.4125& (-3.5166, -3.3084) \\
			MA3&-2  & -1.7526 &   0.0203 & 0.2474 &(-1.7924, -1.7128) \\
			Intercept&0.1 & 0.1811  &  0.0183  & 0.0811 &(0.1452, 0.2170)  \\
			cos(2$\pi t$/52)&-0.4 &  -0.4897  &  0.0216  &0.0897 &(-0.5320, -0.4474) \\
			sin(2$\pi t$/52)&-0.5 &  -0.5612  &  0.0195  &0.0612 &(-0.5994, -0.5230) \\
			Overdisp&2  &  2.8120  &  0.0636  & 0.8120 &(2.6873, 2.9367) \\
			\hline
			\multicolumn{6}{|c|}{N=100}\\ 
			\hline
			
			Intercept&0.3  &  0.2562  &  0.0180   & 0.0438&(0.2209, 0.2914) \\
			$t/100$& 0.0001 &   0.00027   & 0.00039 & 0.00017 &(-0.00049, 0.0010) \\
			cos(2$\pi t$/52)&0.2  &  0.2015  &  0.0096   &0.0015 &(0.1827, 0.2202)  \\
			sin(2$\pi t$/52)&-0.4000  &  -0.4016  &  0.0097 &0.0016 & (-0.4207, -0.3826) \\
			MA1&-3 &  -3.1471  &  0.0220 & 0.1471 &(-3.1902, -3.1040) \\
			MA3&-2  &   -2.1293  &  0.0169  &0.1293 &(-2.1624, -2.0961) \\
			Intercept&0.1 &0.1331  &  0.0100  &0.0331& (0.1135, 0.1527)\\
			cos(2$\pi t$/52)&-0.4 &   -0.4748  &  0.0146& 0.0748 & (-0.5034, -0.4463) \\
			sin(2$\pi t$/52)&-0.5 &  -0.5342  &  0.0136& 0.0342 & (-0.5609, -0.5075) \\
			Overdisp&2  &  2.6211  &  0.0467  &0.6211 & (2.5296, 2.7125)  \\
			
			\hline
			\multicolumn{6}{|c|}{N=500}\\ 
			\hline
			
			Intercept&0.3  &  0.2851  &  0.0058  & 0.0149 &(0.2737, 0.2965)\\
			$t/500$& 0.0001 &   0.000101  &  $1.8\times 10^{-06}$  & $10^{-06}$ &(0.00009, 0.000104)\\
			cos(2$\pi t$/52)&0.2  &   0.1966  &  0.0033  & 0.0034 &(0.1902, 0.2030)\\
			sin(2$\pi t$/52)&-0.4000  & -0.3983  &  0.0033  & 0.0017&(-0.4048, -0.3919)\\
			MA1&-3 &   -3.0117  &  0.0073 & 0.0117& (-3.0259, -2.9974)\\
			MA3&-2  &  -2.0098  &  0.0055  &0.0098 &(-2.0206, -1.9990)\\
			Intercept&0.1 &    0.0918  &  0.0034  &0.0082 & (0.0851, 0.0985)\\
			cos(2$\pi t$/52)&-0.4 &   -0.4125  &  0.0048 & 0.0125 &(-0.4220, -0.4030)\\
			sin(2$\pi t$/52)&-0.5 &  -0.4993  &  0.0049  &0.0007 &(-0.5089, -0.4897)\\
			Overdisp&2  &  2.1963  &  0.0124  & 0.1963 &(2.1720, 2.2205)\\
			
			\hline 
			\multicolumn{6}{|c|}{MLE}\\
			\hline
			\multicolumn{6}{|c|}{N=30}\\ 
			\hline
			Intercept&0.3   & 0.3917  &  0.0288  & 0.0917 &(0.3353, 0.4481)  \\	
			$t/30$&0.0001 &   0.00034  &  0.0027 & 0.00024 &(-0.0050, 0.0056) \\ 
			cos(2$\pi t$/52)&0.2  &  0.2813  &  0.0191  & 0.0813 &(0.2439, 0.3187) \\
			sin(2$\pi t$/52)&-0.4  &  -0.4513  &  0.0167& 0.0513 & (-0.4840, -0.4186) \\
			MA1&-3  &  -3.6412 &   0.0435 &0.6412  &(-3.7265, -3.5559) \\ 
			MA3&-2  &  -1.6513  &  0.0211 & 0.3487 &(-1.6927, -1.6099) \\
			Intercept&0.1  &  0.1884  &  0.0191  & 0.0884 &(0.1510, 0.2258) \\
			cos(2$\pi t$/52)&-0.4  &  -0.4981  &  0.0254 &0.0981 &(-0.5479, -0.4483) \\
			sin(2$\pi t$/52)&-0.5  &  -0.5815  &  0.0199 & 0.0815 &(-0.6205, -0.5425) \\
			Overdisp&2 &   2.9189   & 0.0731 &   &(2.7756, 3.0621) \\
			
			\hline
			\multicolumn{6}{|c|}{N=100}\\ 
			\hline
			Intercept&0.3   &   0.3617  &  0.0197  & 0.0617 &(0.3231, 0.4003)  \\
			$t/100$&0.0001 &     0.0002  &  0.0003   &0.0001 &(-0.00038, 0.00078)  \\
			cos(2$\pi t$/52)&0.2  &  0.2355  &  0.0100 &0.0355 &(0.2159, 0.2551)  \\
			sin(2$\pi t$/52)&-0.4  &  -0.4060  &  0.0102  & 0.0060 &(-0.4259, -3.860) \\
			MA1&-3  &   -3.1803  &  0.0240 &  0.1803 &(-3.2273, -3.1332)  \\
			MA3&-2  &   -2.1311  &  0.0184  & 0.1311 &(-2.1673, -2.0950) \\
			Intercept&0.1  & 0.1640  &  0.0185  & 0.0640 &(0.1277, 0.2002) \\
			cos(2$\pi t$/52)&-0.4  &  -0.4538  &  0.0199  & 0.0538 &(-0.4928, -0.4147) \\
			sin(2$\pi t$/52)&-0.5  & -0.5591  &  0.0222 & 0.0591 & (-0.6026, -0.5155) \\
			Overdisp&2 &  2.5507  &  0.0500  & 0.5507 &(2.4527, 2.6486)  \\
			
			\hline
			\multicolumn{6}{|c|}{N=500}\\ 
			\hline
			Intercept&0.3   &  0.2862  &  0.0054& 0.0138 &(0.2756, 0.2968)\\	
			$t/500$&0.0001 &   0.000102  &  0.00001  & $2\times 10^{-06}$ &(0.00008, 0.00012)\\
			cos(2$\pi t$/52)&0.2  &   0.2012  &  0.0029  &0.0012 & (0.1954, 0.2069)\\
			sin(2$\pi t$/52)&-0.4  &    -0.3954  &  0.0029 & 0.0046 &(-0.4012, -0.3896)\\
			MA1&-3  &  -3.0192  &  0.0067  & 0.0192&(-3.0323, -3.0061)\\
			MA3&-2  &   -2.0140  &  0.0051 &0.0140 & (-2.0240, -2.0040)\\
			Intercept&0.1  &   0.0922  &  0.0040  & 0.0078 &(0.0844, 0.0999)\\
			cos(2$\pi t$/52)&-0.4  &  -0.4054  &  0.0053  & 0.0054 &(-0.4158, -0.3950)\\
			sin(2$\pi t$/52)&-0.5  &  -0.5013 &   0.0054 & 0.0013 &(-0.5119, -0.4907)\\
			Overdisp&2 &   2.1695  &  0.0118  & 0.1695 &(2.1463, 2.1927)\\
			
			\hline
			
		\end{tabular}
	\end{center}
\end{table}

\begin{figure}[h!]
	\begin{subfigure}{.33\linewidth}
		\centering
		\includegraphics[width=1\linewidth]{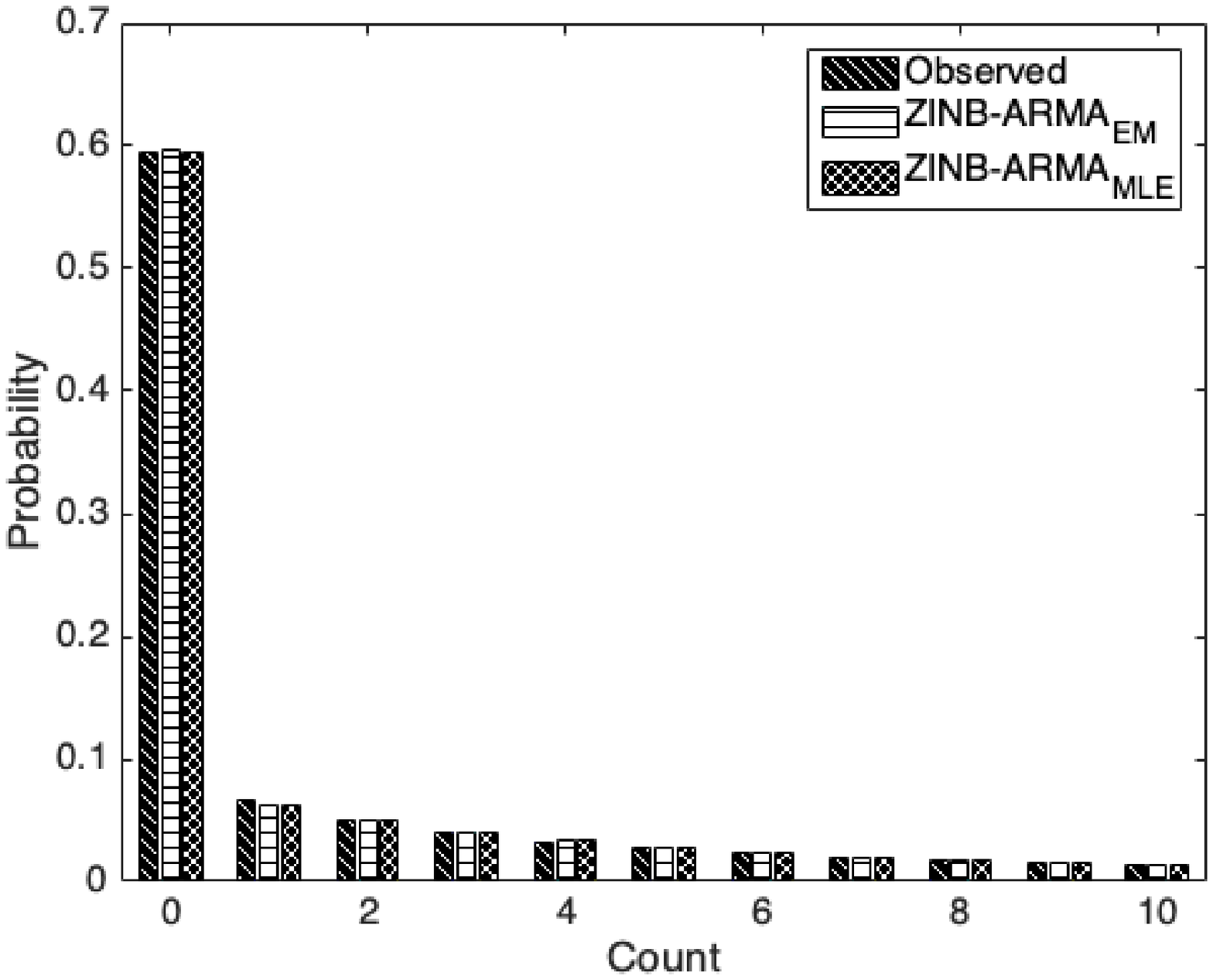}
		\caption{Model 1}
		\label{}
	\end{subfigure}%
	\begin{subfigure}{.33\linewidth}
		\centering
		\includegraphics[width=1\linewidth]{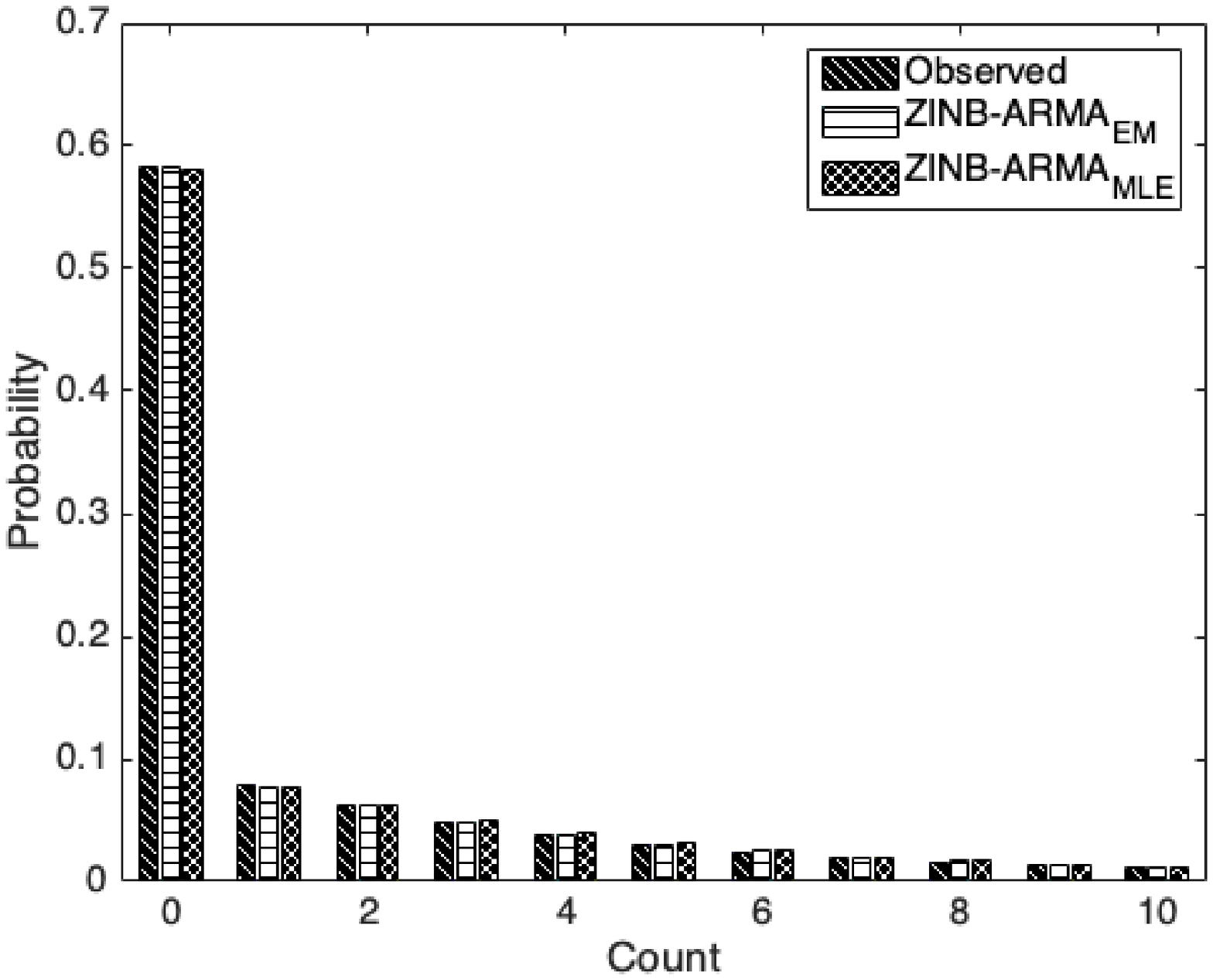}
		\caption{Model 2}
		\label{}
	\end{subfigure}
\begin{subfigure}{0.33\linewidth}
	\centering
	\includegraphics[width=1\linewidth]{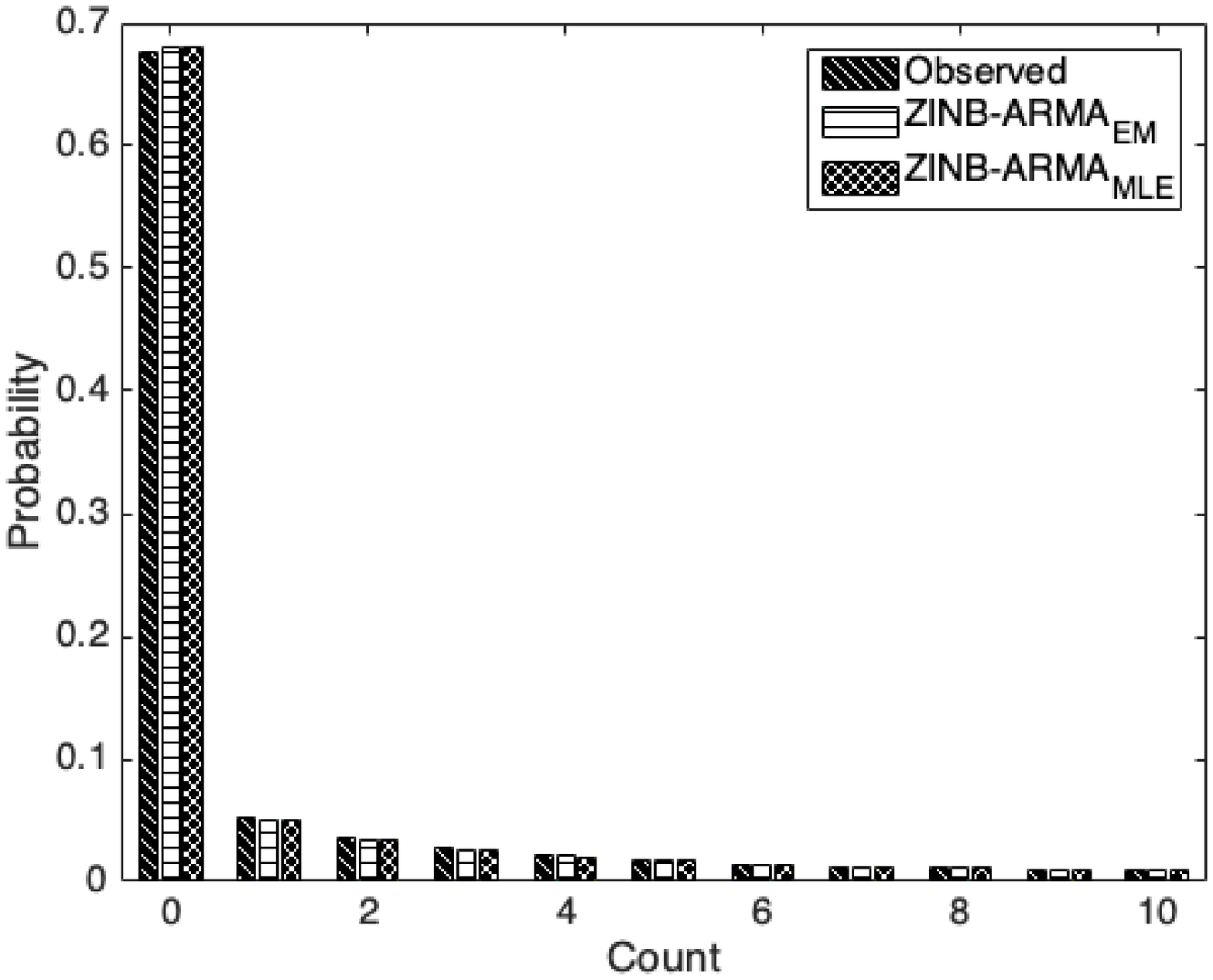}
	\caption{Model 3}
	\label{GF_sim2}
\end{subfigure}
	\caption{Goodness-of-fit plots for $n=100$ corresponding to simulation results  of Models 1, 2 and 3 considered in Section \ref{sim}. Here ZINB-ARMA denotes our proposed model of zero inflated NB-ARMA.}
	\label{GF_sim}
\end{figure}

\begin{figure}[h!]
	\centering
	\includegraphics[scale=0.40]{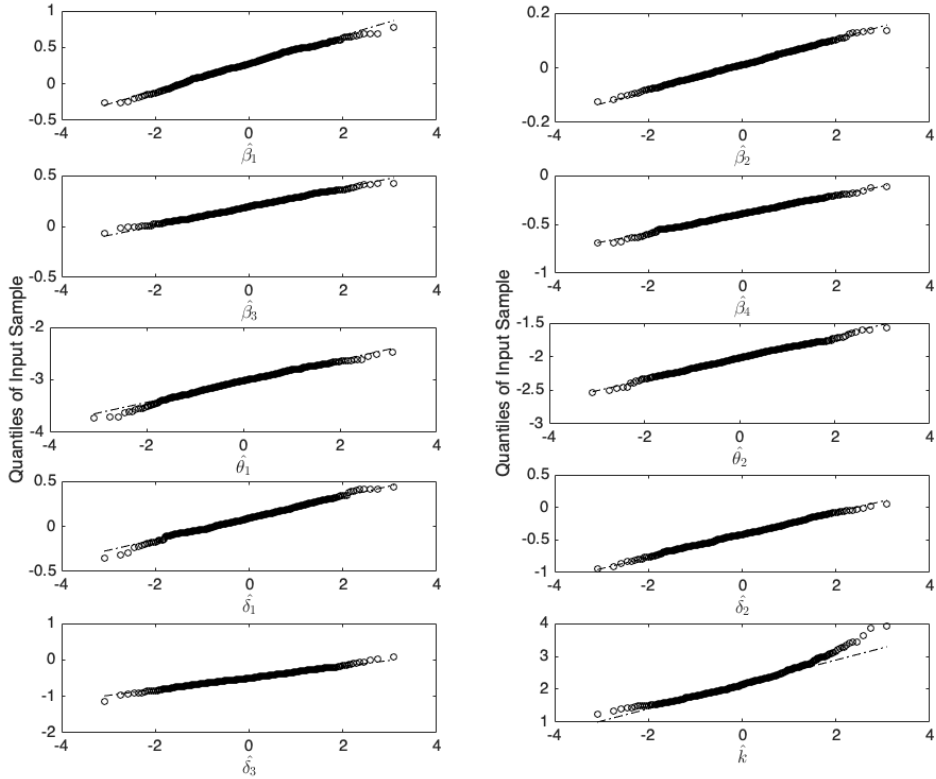}
	\caption{Normal probability plot i.e., QQ plot of the MLE estimates for simulated Model 3 in Section \ref{sim}, $N=100$.  Here $\beta_{1} = 0.3,\beta_{2} = 0.1,\beta_{3} = 0.2,\beta_{4} = -0.4$, $ \theta_1=-3,\theta_2=-2$, $\delta_{1}= 0.1,\delta_{2}= -0.4,\delta_{3}= -0.5,k= 2$.}
	\label{empqqplot}
\end{figure}

\begin{table}[h!]\small
	\centering
	\caption{Summary table for Models 1-6 in Section \ref{real}}
	\label{modelselectiontable}
	\begin{tabular}{|c|c|c|c|c|c|c|c|c|}
		\hline
		Model &$\#$ parameters  & MSE  & $\chi^{2}$ & df        & Deviance  & AIC      & BIC       \\ \hline
		M1&	16&	9.4866&	189.9052	&133&	108.2838&	481.7745&	529.8376 \\ 
		
		M2	&14	&10.4535&	199.0972&	135	&111.9753&	485.1197&	527.1749\\
		
		M3 &16	&10.5009&	204.5800	&133	&112.0686	&492.4039&	540.4671\\
		
		M4& 13	&11.6191&	200.7068&	136	&111.2048&	485.0038&	524.0551\\
		
		M5&10&	12.9951&	190.9382	&139&	109.8256&	480.7847&	510.8241\\
		M6&8	&12.7397	&189.2973&	141	&109.9701&	485.9513&	511.4829\\
		\hline
		
	\end{tabular}
\end{table}

\begin{table}[h!]
	\scriptsize
	\begin{center}
		\caption{EM Estimates of model M5 in Section \ref{real}}
		\label{tab:table5}
		\begin{tabular}{|c|c|c|c|c|r} 
			\hline
			Means &Parameters &Estimates & Std. Errors & p-values  \\
			\hline
			
			    &	Intercept &	2.0274   & 0.8010 &   0.0125\\
			&	$t'/148$&1.2642   & 0.2866   & $<0.0001$\\
			&	cos$ (2\pi t'/52) $&0.9644  &  0.1812 &   $<0.0001$\\
			$\lambda_t$	& sin$ (2\pi t'/52) $&-1.8482 &   0.1508  &      $< 0.0001$\\
			&	HMD	&-0.0348   & 0.0108   & 0.0016\\
			&	MA1&-0.0935   & 0.0759   & 0.2197\\
			&	MA3&0.1673   & 0.0575   & 0.0042\\
			&	Intercept &	-14.8254 &   8.6312  &  0.0881\\
			$\pi_t$&	cos$ (2\pi t'/52) $ &12.9723 &   8.2209  &  0.1168\\
			&	sin$ (2\pi t'/52) $ &6.8231  &  4.3688  &  0.1206\\\hline
			&	overdisp. &	8.0569   & 3.1593 &   0.0118\\
			\hline		\end{tabular}
	\end{center}
\end{table}

\begin{table}[h!]
	\label{t3}
	\scriptsize
	\begin{center}
		\caption{Summary table for all compared models in Section \ref{real}}
		\label{tab:denguedatacomparetable}
		\begin{tabular}{|cccccc|} 
			\hline
			Model& $\#$ Parameters &	 MSE&	AIC&	BIC  & MAD \\
			\hline
			M5 &	10&	12.99& 480.78&	510.82 &1.5315\\
			AM-1 &	9	&13.64&	487.76&	517.74& 1.7423\\
			AM-2 &	6 &	13.91&	494.88&	513.35 & 1.8156\\
			AM-3 & 4 & 16.19 & 863.14 & 899.19&2.4246\\
			AM-4 & 4  &14.53  & 870.34  & 906.39&2.2270\\
			\hline	
		\end{tabular}
	\end{center}
\end{table}
\begin{table}[h!]
	\scriptsize
	\begin{center}
		\caption{Sensitivity and Specificity Values for M5 and AM-1 considered in Section \ref{real}. ``Th'' represents threshold.}
		\label{ss}
		\begin{tabular}{|c|c|c|c|c|c|c|} 
			\hline
			Model&	\multicolumn{2}{|c|}{Th=0.4} & \multicolumn{2}{|c|}{Th=0.5} &\multicolumn{2}{|c|}{Th=0.6}  \\
			\hline
			& Sensitivity & Specificity &  Sensitivity & Specificity &  Sensitivity & Specificity \\
			
			\hline
			M5 & 92\%& 85\% &100\% &83\% &100\% &81\%\\
			AM-1 & 60\% & 85\% & 68\% &81\% & 74\% &78\% \\
			
			\hline
			
		\end{tabular}
	\end{center}
\end{table}

\begin{figure}[h!]
	\centering
	\includegraphics[scale=0.30]{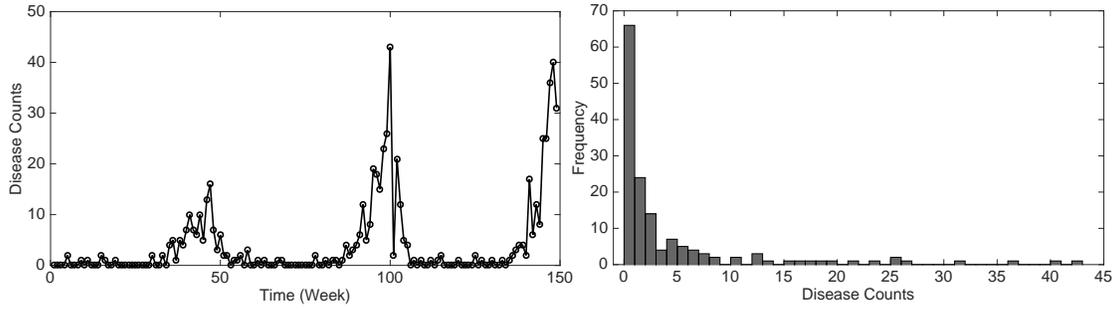}
	\caption{Time Series plot and histogram of weekly dengue count data}
	\label{data_hist}
\end{figure}
\begin{figure}[h!]
	\begin{subfigure}[b]{.5\textwidth}
		\centering
		\includegraphics[width=.8\linewidth]{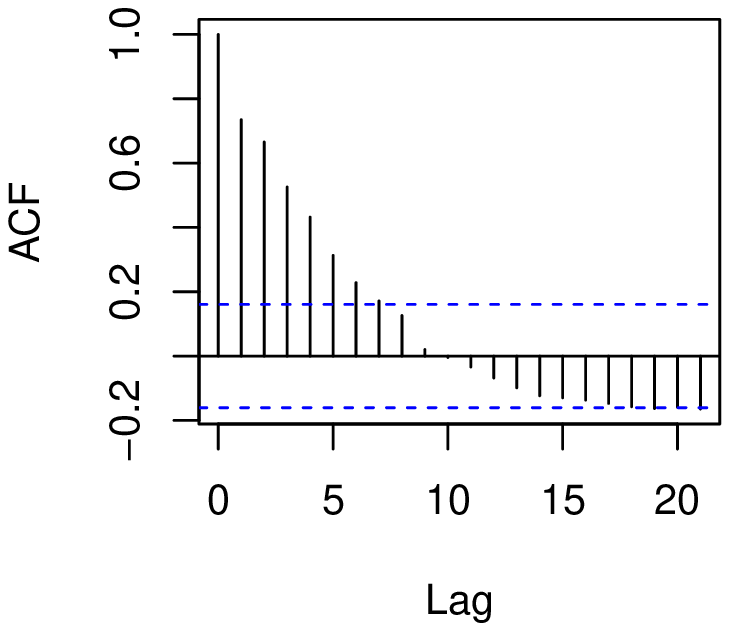}
		\caption{ACF plot}
		\label{}
	\end{subfigure}%
	\begin{subfigure}[b]{.5\textwidth}
		\centering
		\includegraphics[width=.8\linewidth]{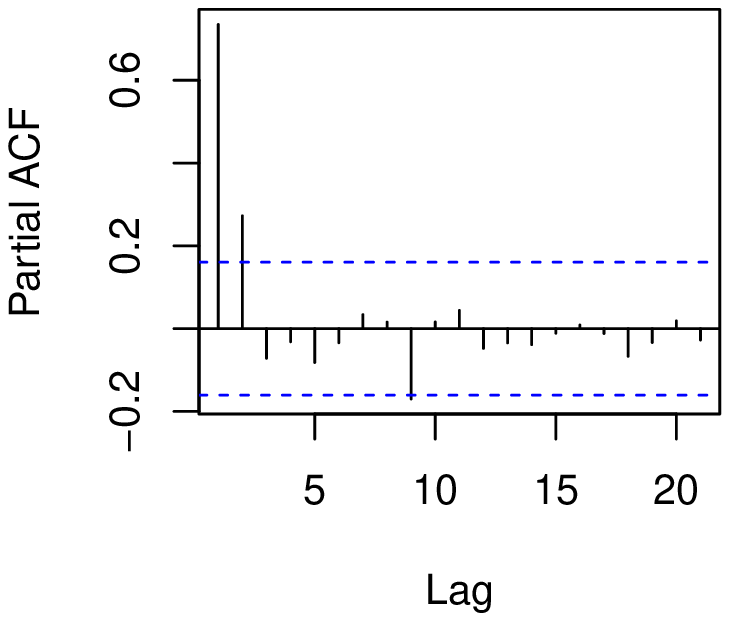}
		\caption{PACF plot}
		\label{}
	\end{subfigure}
	\caption{ACF and PACF plots of raw weekly dengue counts}
	\label{acfdata}
\end{figure}
\begin{figure}[h!]
	\begin{subfigure}{.5\linewidth}
		\centering
		\includegraphics[width=.7\linewidth]{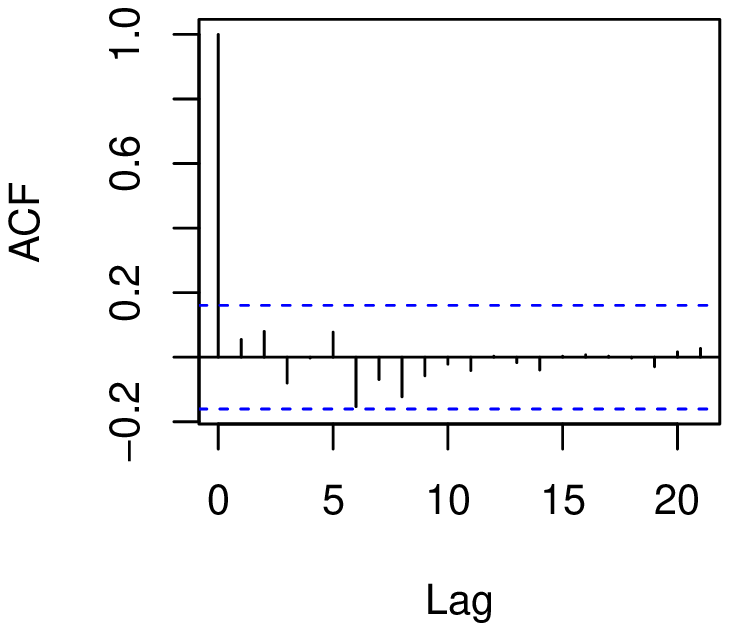}
		\caption{ACF plot of quantile residuals}
		\label{}
	\end{subfigure}%
	\begin{subfigure}{.5\linewidth}
		\centering
		\includegraphics[width=.7\linewidth]{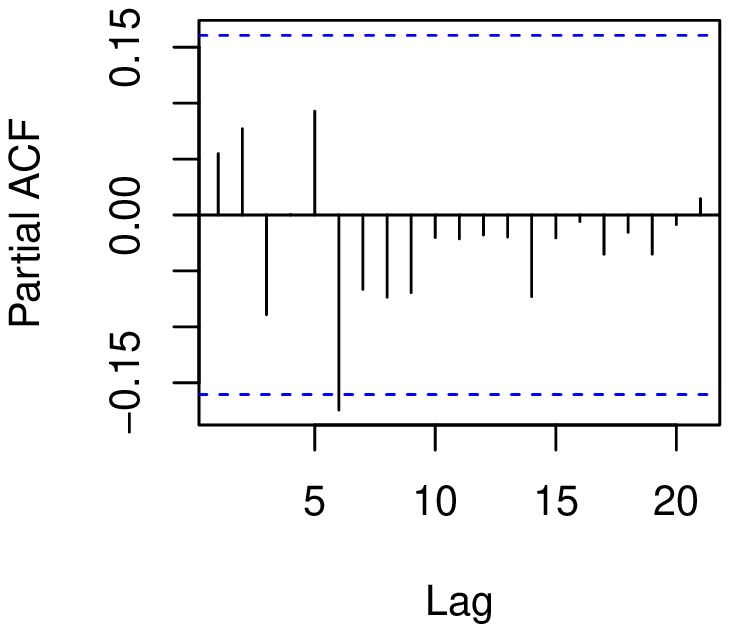}
		\caption{PACF plot of quantile residuals}
		\label{}
	\end{subfigure}
\begin{subfigure}{\linewidth}
	\centering
	\includegraphics[scale=0.3]{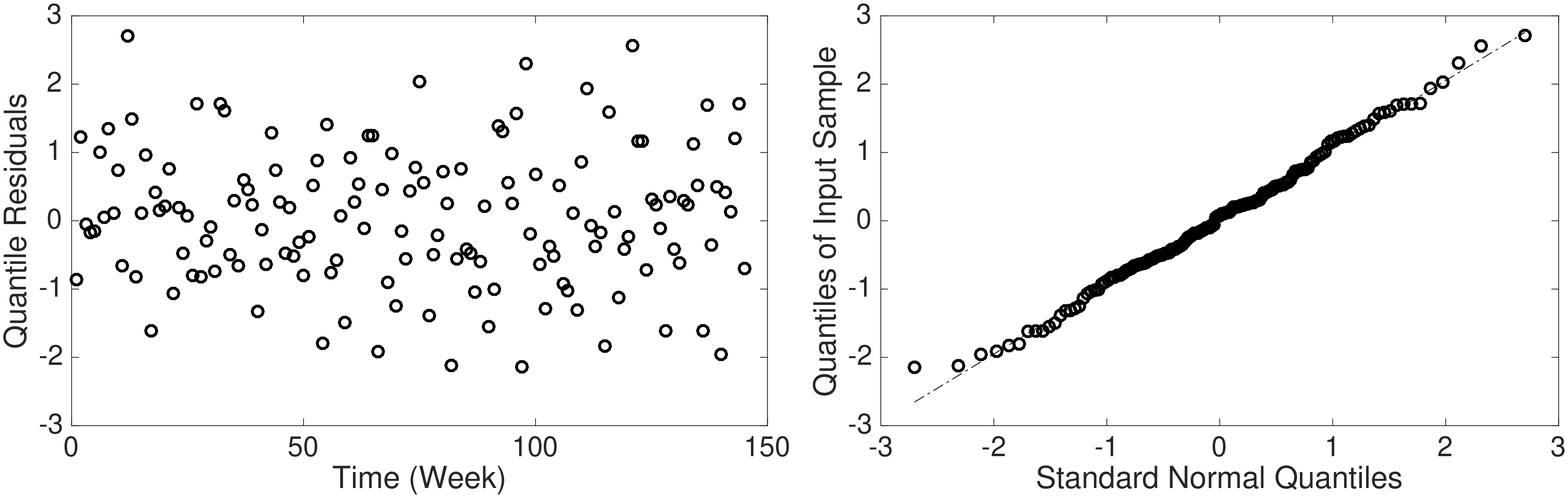}
	\caption{Randomized quantile residuals and QQ-plot}
	\label{residual_qqplot}
\end{subfigure}
	\caption{Randomized quantile residual analysis for model M5 in Section \ref{real}}
	\label{acfresiduals}
\end{figure}
\begin{figure}[h!]
	\centering
	\includegraphics[scale=0.30]{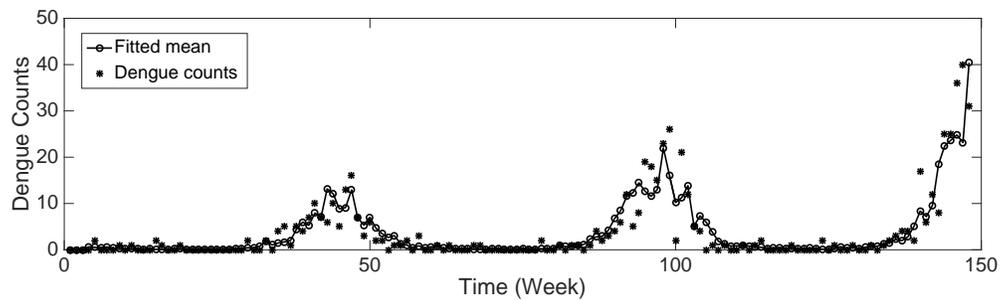}
	\caption{Actual dengue counts and fitted mean values from Model 5}
	\label{counts_fitted}
\end{figure}



\begin{table}[h!]\small
	\centering
	\caption{Summary table for Models 1-6 corresponding to dengue data in Section \ref{real2}}
	\label{modelselectiontable2}
	\begin{tabular}{|c|c|c|c|c|c|c|c|c|}
		\hline
		Model &$\#$ parameters  & MSE  & $\chi^{2}$ & df        & Deviance  & AIC      & BIC       \\ \hline
		M1&4&20.7445&	190.8015&	205	&143.3237	&1080.6572&	1097.3698\\
		M2&5&	20.1038	&190.5589&	204&	144.3150	&1075.0041	&1091.7157\\
		M3&6&	19.8279	&188.6323	&203	&144.7702&	1072.6851	&1092.5425\\
		M4&7&	19.9974	&188.6969&	202	&144.5563	&1079.1456	&1092.7394\\	
		M5&6&	20.2616	&191.2593	&203&	144.6008&	1077.9771&	1098.0313\\
		M6&7&	19.8713	&188.8358	&202&	144.9468	&1075.3745&	1098.7672\\
		\hline
	\end{tabular}
\end{table}

\begin{table}[h!]
	\scriptsize
	\begin{center}
		\caption{EM estimates of zero-inflated NB-ARMA fitted to Syphilis data, in Section \ref{real2}}
		\label{tab:estimates}
		\begin{tabular}{|c|c|c|c|c|r}
			\hline
			Means &Parameters &Estimates & Std. Errors & p-values  \\
			\hline
			
			&	Intercept &1.6134   & 0.0949   &      0\\
			$\lambda_t$	&	$t'/208$&0.3775   & 0.1564 &   0.0167\\
			& MA1&-0.1509  &  0.0608  &  0.0138\\
			&	MA2&-0.0724  &  0.0562  &  0.1995\\\hline
			$\pi_t$&	Intercept &	-1.3091  &  0.3724  &  0.0005\\
			&	$t'/208$&0.2122  &  0.6121  &  0.7291\\\hline
			&	overdisp. &3.0981  &  0.6864  &  0.0000\\
			\hline		\end{tabular}
	\end{center}
\end{table}


\begin{table}[h!]\small
	\centering
	\caption{Model comparisons for Syphilis data, in Section \ref{real2}}
	\label{Compare_ghahramani}
	\begin{tabular}{|c|c|c|c|c|c|}
		\hline
		Model & MSE  & AIC      & BIC       \\ \hline
		
		Zero inflated NB-ARMA&19.8279	&	1072.6851	&1092.5425\\
		ZIP& 20.2455 &1173.6862&	1187.0551\\
		ZINB&20.2326	&1077.3358	&1097.3898\\
		FRM Method&	20.2468	&	1078.3750	&1098.4290\\
		\hline
	\end{tabular}
\end{table}

%


\end{document}